\renewcommand{\top}{\text{Top}}
\newcommand{\sch}{\text{Sch}}
\newcommand{\psch}{\text{$\biop$Sch}}
\newcommand{\gsch}{\text{GSch}}
\newcommand{\gl}{\text{GL}}
\newcommand{\xspace}{\mathcal{X}}
\newcommand{\pid}{\mathfrak{p}}
\newcommand{\oneideal}{\mathfrak{a}}
\newcommand{\hookedrightarrow}[1]{ \tikz	\draw[{Hooks[right,length=5,width=6]}->] (0,0) -- (#1cm,0cm);	\; }
\newcommand{\xmapsfrom}[1]{
								\;\begin{tikzpicture}
												\draw [{Stealth[sep,length=0.8mm,width=1.2mm]}-{Bar[width=3pt]}] (0,0) -- (#1mm,0);
								\end{tikzpicture}\;
				}
\newcommand{\myemph}[1]{{\it{\bf{#1}}}}
\newcommand{\xtworightarrow}[2][]{%
				  \xrightarrow[#1]{#2}\mathrel{\mkern-14mu}\rightarrow
	}
\newcommand{\xtwoleftarrow}[2][]{%
				  \leftarrow\mathrel{\mkern -14mu}\xleftarrow[#1]{#2}
	}
\newcommand{\op}{\text{op}}
\newcommand*{\rom}[1]{\expandafter\@slowromancap\romannumeral #1@}
\newcommand{\cring}{\text{CRing}}
\newcommand{\unit}{{\mathbf{1}}}
\newcommand{\bigo}{\mathcal{O} }
\newcommand{\F}{\mathbb{F}}
\newcommand{\Q}{\mathbb{Q}}
\newcommand{\R}{\mathbb{R}}
\newcommand{\C}{\mathbb{C}}
\newcommand{\N}{\mathbb{N}}
\DeclareRobustCommand{\properideal}{\mathrel{\text{$\m@th\proper@ideal$}}}
\newcommand{\proper@ideal}{%
  \ooalign{$\lneq$\cr\raise.22ex\hbox{$\lhd$}\cr}%
}
\newcommand{\Z}{\mathbb{Z}}
\newcommand{\smallcoprod}[1]{  
				\underset{#1}{\amalg}
}
\newcommand{\limrightarrow}{\mathop{\underrightarrow{\text{lim}}}}
\newcommand{\oplussum}{\mathop{\oplus}}
\newcommand{\E}{\mathcal{E}}
\newcommand{\biop}{\mathscr{P}}
\newcommand{\bioq}{\mathscr{Q}}
\newcommand{\circleftarrow}{\,\overleftarrow{\circ}\,}
\newcommand{\circrightarrow}{\,\overrightarrow{\circ}\,}
\DeclareMathOperator{\End}{End} 
\DeclareMathOperator{\bio}{Bio} 
\DeclareMathOperator{\cbio}{CBio} 
\DeclareMathOperator{\ctbio}{C_TBio} 
\DeclareMathOperator{\ctprop}{C_TProp} 
\DeclareMathOperator{\prop}{Prop} 
\DeclareMathOperator{\cprop}{CProp} 
\DeclareMathOperator{\crig}{CRig} 
\DeclareMathSymbol{\LLCurly}{\mathrel}{mathb}{"CE}
\DeclareMathSymbol{\ggcurly}{\mathrel}{mathb}{"CF}
\newtheorem*{claim*}{Claim}
\newtheorem{theorem}{Theorem}
\newtheorem{definition}[equation]{Definition} 
\newtheorem{definition*}{Definition} 
\newtheorem{remark}[equation]{Remark}
\newtheorem*{remark*}{Remark}
\newtheorem{example}[equation]{Example}
\newtheorem{example*}{Example}
\newtheorem*{corollary*}{Corollary}
\numberwithin{equation}{section}
\title{Non-Additive Geometry and Frobenius Correspondences}
\author{Shai Haran \\ {haran@technion.ac.il} }
\date{}
\begin{document}
\maketitle
\begin{abstract}
  The usual language of algebraic geometry is not appropriate for
  Arithmetical geometry: addition is singular at the real prime. We
  developed two languages that overcome this problem: one replace
  rings by the collection of “vectors” or by bi-operads and another
  based on “matrices” or props. These are the two languages of
  \cite{H17}, but we omit the involutions which brings considerable
  simplifications. Once one understands the delicate commutativity
  condition one can proceed following Grothendieck footsteps exactly.
  The square matrices , when viewed up to conjugation, give us new
  commutative rings with Frobenius endomorphisms.
\end{abstract}
\section*{Introduction} \label{sec:0}
Our starting point is Andr\'e Weil's ``Roseta - Stone'': The analogy between number fields
and function fields. On the one hand we have the mysterious number fields - the finite
extension of the field of rational numbers $\Q$. On the other hand we have the function
fields: the finite extensions $K/\F_{q}(z)$ (resp. $K/\C(z)$) of the field of rational
functions $\F_{q}(z)$ (resp. in characteristic $0$: $\C(z)$), which correspond to smooth
projective curves $X_{K}$ (with a dominant map $z:X_{K}\to \mathbb{P}_{k}^{1}$, 
$k=\F_q$ or $\C$); the points of $X_{K}$ correspond to the
valuations of $K/k$. \\
When we compare the field of rational functions $k(z)$ with the field of rational numbers
$\Q$ we discover: 
\begin{equation*}
\begin{tikzpicture}[baseline=0mm]
				 \node at (74mm,30mm) {the ``point at infinity''};
				\node at (40mm,30mm) {$k\left( (\frac{1}{z})\right)\supseteq k\left[ |\frac{1}{z}| \right] $};
				\draw[{Hooks[right,length=5,width=6]}->] ($(18mm,23mm)-(0,3mm)$)--($(27mm,32mm)-(0,3mm)$); 
				\node at (14mm,18mm) {$k(z)$};
				\draw[{Hooks[right,length=5,width=6]}->] ($(20mm,21mm)-(0,3mm)$)--($(27mm,21mm)-(0,3mm)$); 
				\node at (51mm,18mm) {$k((z-\alpha))\equiv k\left[ |z-\alpha| \right]\left(
				\frac{1}{z-\alpha} \right)$};
				\node at (14mm,10mm) {\rotatebox{90}{\Huge $\subseteq$}};
				\node at (35mm,10mm) {\rotatebox{90}{\Huge $\subseteq$}};
				\node at (14mm,03mm) {$k[z]$};
				\draw[{Hooks[right,length=5,width=6]}->] ($(20mm,06mm)-(0,3mm)$)--($(27mm,06mm)-(0,3mm)$); 
				\node at (51mm,02mm) {$k[|z-\alpha|]\equiv \lim\limits_{\underset{n}{\leftarrow}}
				k[z]/(z-\alpha)^n$};
\end{tikzpicture} 
\end{equation*}
versus: \\
\begin{equation*}
\begin{tikzpicture}[baseline=0mm]
				\node at (75mm,30mm) {the ``real prime''};
				\node at (42mm,30mm) {$\R\supseteq \Z_{\R}\equiv [-1,1]$};
				\draw[{Hooks[right,length=5,width=6]}->] ($(18mm,23mm)-(0,3mm)$)--($(27mm,32mm)-(0,3mm)$); 
				\node at (14mm,18mm) {$\Q$};
				\draw[{Hooks[right,length=5,width=6]}->] ($(18mm,21mm)-(0,3mm)$)--($(27mm,21mm)-(0,3mm)$); 
				\node at (39mm,18mm) {$\Q_{p}\equiv \Z_{p}\left( \frac{1}{p} \right)$};
				\node at (14mm,12mm) {\rotatebox{90}{\Huge $\subseteq$}};
				\node at (31mm,12mm) {\rotatebox{90}{\Huge $\subseteq$}};
				\node at (15mm,06mm) {$\Z$};
				\draw[{Hooks[right,length=5,width=6]}->] ($(18mm,09mm)-(0,3mm)$)--($(27mm,09mm)-(0,3mm)$); 
				\node at (40mm,04mm) {$\Z_p\equiv \lim\limits_{\underset{n}{\leftarrow}}
				\Z/p^n$};
\end{tikzpicture}
\end{equation*}
The ``real integers'' $\Z_{\R}\subseteq \R$ are not closed under addition and so do not
form a Ring. The original Roseta stone contained three different languages talking about
one and the same reality, while in mathematics we have one and the same language - the
language of rings: addition and multiplication - and this language is talking about three
different realities; moreover: it is not the appropriate language -addition is problematic
at the real prime!
\section{Bio (perads)} \label{sec:1}
\begin{definition}
				A Bio (short for Bi-operad) is a pair $\biop=(\biop^{-},\biop^{+})$ of symmetric
				operads $\biop^{-}$ and $\biop^{+}$ ``acting'' on each other. 
				\label{def:1.1}
\end{definition}
Explicitly, we have sets $\biop^{-}(n)$, $\biop^{+}(n)$, $n\ge 1$, with an action
of the symmetric group $S_{n}$ on the right on $\biop^{-}$, and on the left on
$\biop^{+}$,
\begin{equation}
				\begin{aligned}
\begin{tikzpicture}[baseline=0mm]
				 \draw[->] (16mm,14.1mm) arc [start angle=-130, end angle=130, radius=1.5mm];
				 \node at (10mm,15mm) {$\biop^{-}(n)$};
				 \node at (10mm,8mm) {$p\mapsto p\circ \sigma$};
				 \node at (35mm,15mm) {$S_{n}$};
				 \node at (35mm,12mm) {\rotatebox{90}{$\in$}};
				 \node at (35mm,8.5mm) {$\sigma$};
				 \node at (60mm,15mm) {$\biop^{+}(n)$};
				 \draw[->] (52mm,14.1mm) arc [start angle=130, end angle=-130, radius=-1.5mm];
				 \node at (57mm,8mm) {$\sigma\circ q\mapsfrom q$};
\end{tikzpicture}
				\end{aligned}
\label{eq:1.2}
\end{equation}
We have \myemph{composition} maps
\begin{equation}
				\begin{array}[H]{l}
				\begin{array}[H]{lll}
								\biop^{-}(n)\times \biop^{-}(k_1)\times \cdots \times
								\biop^{-}(k_{n})\longrightarrow \biop^{-}(k_1+\cdots+k_n)    \\
								(p,p_1,\cdots, p_{n})\mapsto p\circ(p_{i}) 			
				\end{array}, \\\\
				\begin{array}[H]{l}
								\biop^{+}(k_1+\cdots +k_n) \longleftarrow \biop^{+}(k_1)\times \cdots \times
								\biop^{+}(k_n)\times\biop^{+}(n) \\
								(q_i)\circ q\mapsfrom (q_1,\cdots , q_n,q) 
				\end{array}
				\end{array}
				\label{eq:1.3}
\end{equation}
which are associative 
\begin{equation}
				(p\circ(p_{i}))\circ(p_{ij})=p\circ\left( p_{i}\circ (p_{ij}) \right)\;\; , \;\; 
				\left( (q_{ij})\circ q_i \right)\circ q = (q_{ij})\circ \left( (q_{i})\circ q \right)
				\label{eq:1.4}
\end{equation}
and unital 
\begin{equation}
				\begin{array}[H]{l}
				p\circ(1^{-},\cdots, 1^{-})=p\;\; , \;\; 1^{-}\in \biop^{-}(1), \\\\
				\biop^{+}(1)\ni 1^{+} \;\; , \;\; q=(1^{+},\cdots ,1^{+})\circ q
				\end{array}
				\label{eq:1.5}
\end{equation}
and "$S_{k_{1}}\times \cdots \times S_{k_n}\rtimes S_{n}\subseteq S_{k_1+\ldots +
k_n}$ - covariant." We have also \myemph{action} maps 
\begin{equation}
				\begin{array}[H]{l}
								\biop^{-}(k_{1}+\cdots + k_{n})\times \biop^{+}(k_1)\times\cdots
								\times\biop^{+}(k_{n})\rightarrow\biop^{-}(n), \\\\
								(p,q_{1},\ldots q_{n}) \mapsto p \circleftarrow (q_{i}), \\\\
								\biop^{+}(n)\leftarrow \biop^{-}(k_{1})\times \cdots \times
								\biop^{-}(k_n)\times \biop^{+}(k_{1}+\cdots +k_{n}), \\\\
								(p_i)\circrightarrow q\mapsfrom (p_1,\cdots , p_n,q) 
				\end{array}
				\label{eq:1.6}
\end{equation}
which are associative
\begin{equation}
				\begin{array}[H]{l}
				p\circleftarrow \left( (q_{ij}\circ q_{i}) \right) = \left( p\circleftarrow
				(q_{i j}) \right)\circleftarrow (q_{i}), \\\\ 
				(p_{i})\circrightarrow \left( (p_{ij})\circrightarrow q \right) = \left(
								p_{i}\circ (p_{ij}) \right)\circrightarrow q
				\end{array}
								\label{eq:1.7}
\end{equation}
unital 
\begin{equation}
				p\circleftarrow(1^{-})=p \quad , \quad q=(1^{+})\circrightarrow q
				\label{eq:1.8}
\end{equation}
$S_{k_1}\times \cdots \times S_{k_n}$ - invariant and $S_{n}$-covariant. \\
We also require \myemph{naturality} of the action maps, so we have 
\begin{equation}
				\left( p\circ (p_{i}) \right)\circleftarrow \left( q_{ij} \right)=p\circ \left(
				p_{i}\circleftarrow (q_{ij}) \right), 
				\label{eq:1.9}
\end{equation}
pictorially \\\\
\begin{tikzpicture}[baseline=0mm, scale=1.4]
				 \draw[-] (3mm,15mm)--(5mm,15mm)--(20mm,8mm)--(20mm,22mm)--(5mm,15mm);
				 \draw[-] (21mm,8mm)--(28mm,12mm)--(28mm,4mm)--(21mm,8mm);
				 \draw[-] (21mm,22mm)--(28mm,26mm)--(28mm,18mm)--(21mm,22mm);
				 \draw[-] (29mm,26mm)--(29mm,23mm)--(33mm,24.5mm)--(29mm,26mm);
				 \draw[-] (33mm,24.5mm)--(35mm,24.5mm);
				 \draw[-] (29mm,21mm)--(29mm,18mm)--(33mm,19.5mm)--(29mm,21mm);
				 \draw[-] (33mm,19.5mm)--(35mm,19.5mm);
				 \draw[-] (29mm,12mm)--(29mm,9mm)--(33mm,10.5mm)--(29mm,12mm);
				 \draw[-] (33mm,10.5mm)--(35mm,10.5mm);
				 \draw[-] (29mm,7mm)--(29mm,4mm)--(33mm,5.5mm)--(29mm,7mm);
				 \draw[-] (33mm,5.5mm)--(35mm,5.5mm);
				 \node at (62mm,16mm) {$p\in\biop^{-}(n), p_{i}\in\biop^{-}(k_{i1}+\cdots+k_{i,\ell_i}),$};
				 \node at (49mm,13mm) {$ q_{ij}\in \biop^{+}(k_{ij})$};
				 \node at (15mm,15mm) {$p$};
				 \node at (26mm,8mm) {$p_n$};
				 \node at (26mm,22mm) {$p_1$};
				 \node at (35mm,27mm) {$q_{11}$};
				 \node at (35mm,17mm) {$q_{1\ell_1}$};
				 \node at (35mm,12.3mm) {$q_{n1}$};
				 \node at (35mm,3mm) {$q_{n\ell_{n}}$};
\end{tikzpicture}\\
and symmetrically, 
\begin{equation}
				\left( (p_{ij})\circrightarrow q_{i} \right)\circ q = \left( p_{ij}
				\right)\circrightarrow \left( (q_{i})\circ q \right);
				\label{eq:1.10}
\end{equation}
We also require co-naturality, so
\begin{equation}
				\big(p\circ (p_{ij})\big)\circleftarrow (q_{i})= p\circleftarrow \left(
				(p_{ij})\circrightarrow q_{i} \right)
				\label{eq:1.11}
\end{equation}
pictorially \\\\
\begin{tikzpicture}[baseline=0mm, scale=1.5]
				 \def\tri#1#2#3{
								 \draw[-] #1--#2--#3--#1;
				 }
				 \node at (15mm,15mm) {$p$};
				 \node at (23mm,24mm) {$p_{11}$};
				 \node at (23mm,16.5mm) {$p_{1\ell_1}$};
				 \node at (23mm,14mm) {$p_{n 1}$};
				 \node at (23mm,6mm) {$p_{n {\ell_n}}$};
				 \node at (23mm,20.75mm) {$\vdots$};
				 \node at (23mm,10.75mm) {$\vdots$};
				 \tri{(21mm,22mm)}{(25mm,23mm)}{(25mm,21mm)};
				 \tri{(21mm,18mm)}{(25mm,19mm)}{(25mm,17mm)};
				 \tri{(21mm,12mm)}{(25mm,11mm)}{(25mm,13mm)};
				 \tri{(21mm,8mm)}{(25mm,7mm)}{(25mm,9mm)};
				 \def\tritail#1#2#3#4{ 
								 \draw[-] #1--#2--#3--#1--($(#4mm,0mm)+#1$);
				 }
				 \tritail{(5mm,15mm)}{(20mm,8mm)}{(20mm,22mm)}{-2};
				 \tritail{(34mm,20mm)}{(27mm,23mm)}{(27mm,17mm)}{2};
				 \node at (30mm,20mm) {$q_1$};
				 \node at (27.6mm,16mm) {$\vdots$};
				 \tritail{(5mm,15mm)}{(20mm,8mm)}{(20mm,22mm)}{-2};
				 \tritail{(34mm,10mm)}{(27mm,13mm)}{(27mm,7mm)}{2};
				 \node at (30mm,10mm) {$q_n$};
				 \node at (52mm,17mm) {$p\in\biop^{-}(\ell_1+\cdots+\ell_n),\;$};
				 \node at (60mm,13mm){$p_{ij}\in\biop^{-}(k_{ij}),\;
				 q_{i}\in\biop^{+}(k_{i{1}}+\cdots+k_{i\ell_i})$};
\end{tikzpicture}\\
and symmmetrically, 
\begin{equation}
				\left( p_{i}\circleftarrow (q_{ij}) \right)\circrightarrow q =
				(p_{i})\circrightarrow \left( (q_{ij})\circ q \right)
				\label{eq:1.12}
\end{equation}
Maps of bios $\varphi:\biop\to \bioq$ are pairs of maps of symmetric operads
$\varphi^{\pm}: \biop^{\pm}\to \bioq^{\pm}$, preserving the $S_n$-actions, the
compositions and units, that also preserve the action maps; thus we have a category Bio. \\
In this definition we took the operads $\biop^{\pm}$ to be open. We can take them also to be closed, so that
we have also $\biop^-(0)=\{0^-\}$ and $\biop^+(0)=\{0^+\}$, and multiplication by $0^{\pm}$ at any place
$j\in \{1,\ldots, n\}$ map $\biop^{\pm}(n)$ to $\biop^{\pm}(n-1)$.
Similarly we can let $0^{\mp}$ act on $\biop^{\pm}(n)$, at any place $j$, mapping it to $\biop^{\pm}(n+1)$. With these $0^{\pm}$, and
with an involution ($\biop^{-}\xleftrightarrow{\;\sim\;}\biop^{+}$), we get a structure equivalent to the one of \cite{H17}. 

\begin{remark}
				The monoids $\biop^{-}(1)$ and $\biop^{+}(1)$ are connonically isomorphic via 
				\begin{equation*}
								\biop^{-}(1) \xleftrightarrow{\quad\sim\quad} \biop^{+}(1)
				\end{equation*}
				\begin{equation*}
								f^{-}\xmapsto{\qquad\quad} f^{-}\circrightarrow 1^{+}
				\end{equation*}
				\begin{equation*}
								1^{-1}\circleftarrow f^{+} \xmapsfrom{13} f^{+}
				\end{equation*}
				We use this as an identification: It identifies $1^{-}$ with $1^{+}$, and for
				$p\in \biop^{-}(n)$, $q\in\biop^{+}(n)$, $p\circrightarrow q$ is identified with
				$p\circleftarrow q$. \\ 
				We will abuse notations and write $\circ$ for either
				$\circleftarrow$ or $\circrightarrow$. 
				\label{remark:1.13}
\end{remark}
\begin{example}
				Given a symmetric monoidal category $\E$ and an object $X\in \E^{\circ}$, we have
				the bio $\End_{\E}(X)$ with 
				\begin{equation*}
								\End_{\E}^{-}(X)(n)= \E(X^{\otimes n},X) \quad , \quad
								\End_{\E}^{+}(X)(n)\equiv \E(X,X^{\otimes n})
				\end{equation*}
				with all the compositions and actions given by the monoidal structure of
				$\E$. 
				\label{example:1.14}
\end{example}
\begin{remark}
				Given a symmetric monoidal category $\E$, we can replace Set everywhere by
				$\E$ and obtain the category $\bio(\E)$ of bios in $\E$.
				\label{remark:1.15}
\end{remark}
\begin{remark}
				The category $\bio$ has an involution 
				\begin{equation*}
								\biop = (\biop^{-},\biop^{+})\xmapsto{\qquad\quad} \left(
								(\biop^{\op})^{-}\equiv \biop^{+},(\biop^{\op})^{+}\equiv \biop^{-}
				\right).
				\end{equation*}
				It takes the monoid $\biop(1)$ to the opposite monoid
				$\biop^{\circ p}(1)\equiv\biop(1)^{\op}$.\\
				We get the category $\bio^{t}$, with
				objects the bios $\biop$ with an involution 
				\begin{equation*}
				p\mapsto p^{t}:
				\biop\xrightarrow{\sim}{\quad}\biop^{\op},
				\end{equation*}
				and with maps preserving the involution. 
				\label{remark:1.16}
\end{remark}
\section{Rigs} \label{sec:2}
\begin{definition}
				A \myemph{Rig} ($\equiv$ Ring without Negatives) is a set $R$ with two
				operations of addition and multiplication 
				\begin{equation}
								R\times R \rightrightarrows R \quad , \quad (x,y)\xmapsto{\quad}
								\begin{array}[H]{l}
												x+y\\
												x\cdot y
								\end{array},
								\label{eq:2.1}
				\end{equation}
				both associative and unital 
				\begin{equation}
								\begin{array}[H]{lll}
												(x+y)+z=x+(y+z)\quad , \quad (x\cdot y)\cdot z = x\cdot (y\cdot z)
												\\\\
												x+0=x=0+x \quad , \quad x\cdot 1=x=1\cdot x
								\end{array}
								\label{eq:2.2}
				\end{equation}
				with addition always commutative $x+y=y+x$, and distributive 
				\begin{equation}
								\begin{array}[H]{l}
								(x_1+x_2)\cdot y = (x_1\cdot y)+(x_2\cdot y)\quad , \quad
								x\cdot(y_1+y_2)=(x\cdot y_1)+(x\cdot y_2) \\\\
								x\cdot 0 = 0 = 0\cdot x
								\end{array}
								\label{eq:2.3}
				\end{equation}
				\label{def:2.4}
\end{definition}
A rig with involution is a rig $R$ with an involution 
\begin{equation}
				\begin{array}[H]{l}
				x\mapsto x^{t}: R\righttoleftarrow \quad , \quad \text{satisfying}\; x^{tt}=x \;
				\text{and}\quad 0^{t}=0\quad , \quad 1^{t}=1 \\\\
				(x+y)^{t}= x^{t}+y^{t}\quad ,\quad (x\cdot y)^{t}=y^{t}\cdot x^{t}
				\end{array}
				\label{eq:2.5}
\end{equation}
A rig is commutative $R\in C\text{Rig}$ if multiplication is commutative,
\begin{equation*}
x\cdot y=y\cdot x 
\end{equation*}
A map of rigs $\varphi: R\to R^{\prime}$ is a set map preserving the operations and
constants 
\begin{equation}
				\varphi(0)=0 \quad , \quad \varphi(1)=1\quad ,\quad  
				\varphi(x+y)=\varphi(x)+\varphi(y)\quad , \quad \varphi(x\cdot y)=\varphi(x)\cdot
				\varphi(y)
				\label{def:19.6}
\end{equation}
and in the self-dual case $\text{Rig}^{t}$, $\varphi$ should also preserve the involution, 
\begin{equation*}
				\varphi(x^{t})=\varphi(x)^{t}. 
\end{equation*}
Thus we have categories and functors
\begin{align}
				\begin{array}[H]{l}
				 \begin{tikzpicture}
								 \node at (10mm,30mm) {$\text{Rig}^{t}$};
								 \node at (40mm,30mm) {$\text{Rig}$};
								 \node at (40mm,15mm) {$C\text{Rig}$};
								 \node at (40mm,22.5mm) {\rotatebox{90}{\Huge $\subseteq$}};
								 \draw[->] (15mm,30mm)--(35mm,30mm);
								 \node at (25mm,32mm) {$U$};
								 \draw[<-] (15mm,27mm)--(35mm,17mm);
				\end{tikzpicture}
				\end{array}
				\label{eq:2.7}
\end{align}
(for a commutative rig the identity is an involution). \vspace{.3cm}\\
There is a similar diagram  with
Ring instead of Rig. The inclusion $\text{Ring}\hookrightarrow \text{Rig}$ has the left
adjoint functor $K=\text{Grothendieck functor localizing addition}$. \vspace{.1cm}\\ 
E.g. We have the commutative
rigs
\begin{equation}
				\mathscr{B}=\left\{ 0,1 \right\}\hookedrightarrow{1} \mathscr{I}=[0,1]
				\hookedrightarrow{1} \mathscr{R}=[0,\infty]
				\label{eq:2.8}
\end{equation}
with the usual multiplication $x\cdot y$, and with the ``tropical'' addition
\begin{equation}
				x+y:\overset{def}{=}\max\left\{ x,y \right\}.
				\label{eq:2.9}
\end{equation}
\vspace{.1cm}
For $A\in \text{Rig}$ we have the operad $\biop_{A}$ 
with $\biop_{A}^{-}(n)\equiv A^{n}$, and with compositions 
\begin{equation}
				o_{i}: A^{n}\times A^{m} \xrightarrow{\qquad} A^{n+m-1}
				\label{eq:2.10}
\end{equation}
\begin{equation*}
				(a_1,\cdots , a_{i}, \cdots a_{n})\circ_{i}(b_1,\cdots , b_{m}):= (a_1,\ldots,
				a_{i-1},a_{i}\cdot b_{1},a_{i}\cdot b_2 \cdots a_i\cdot b_{m},a_{i+1}\cdots
a_{m})
\end{equation*}
We have similarly the operad $\biop_{A}^{+}\equiv \biop_{A^{\text{op}}}$ of column vectors 
$\biop_{A}^{+}(n)\equiv A^{n}$, with composition 
\begin{equation}
				\begin{array}[H]{c}
								A^{n}\times A^{m}\xrightarrow{\quad} A^{m+n-1}, \;\;
				\begin{pmatrix}
								a_1 \\ \vdots \\ a_n
				\end{pmatrix} \circ_j 
				\begin{pmatrix}
								b_1 \\ \vdots \\ b_{j}\\ \vdots \\ b_m
				\end{pmatrix} := 
				\begin{pmatrix}
								b_1 \\ \vdots \\ a_1\cdot b_{j} \\ \vdots \\ a_n\cdot b_{j} \\  \vdots \\
								b_{m}
				\end{pmatrix}
				\end{array}
				\label{eq:2.11}
\end{equation}
Moreover, we have mutual actions, for $1\le k\le m\le n$, 
\begin{equation}
				\begin{array}[H]{c}
								\overleftarrow{\circ}_{k}: A^{n}\times A^{m-k+1} \xrightarrow{\qquad} A^{n-m+k} \\\\
\begin{pmatrix}
				a_1\cdots a_k\cdots a_m\cdots a_n
\end{pmatrix} \circleftarrow_{k} 
\begin{pmatrix}
				b_0\\ b_1 \\ \vdots \\ b_{m-k} 
\end{pmatrix} := 
\\
\begin{pmatrix}
				a_1, \cdots a_{k-1}, a_{k}\cdot b_0+\cdots + a_{k+j}\cdot b_j+\cdots +
				a_{m}\cdot b_{m-k}, a_{m+1},\cdots a_n
\end{pmatrix}
				\end{array}
				\label{eq:2.12}
\end{equation}
and similarly for $\circrightarrow$. \\
This construction gives a full and faithful embedding 
\begin{equation}
				\text{Rig}\; \hookedrightarrow{1} \text{Bio} \quad , \quad
				A\xmapsto{\qquad} \biop_{A}.
				\label{eq:2.13}
\end{equation}
Moreover, when the rig $A$ has an involution, the bio $\biop_{A}=\left(
\biop^{-}_{A},\biop^{+}_{A} \right)$ has an involution
\begin{equation}
				(a_1,\cdots , a_{n})^{t} := 
				\begin{pmatrix}
								a_1^{t}\\ \vdots \\ a_{n}^{t}
				\end{pmatrix}
				\label{eq:2.14}
\end{equation}
giving 
\begin{equation}
\text{Rig}^{t} \hookedrightarrow{2}\text{Bio}^{t}
\label{eq:2.15}
\end{equation}
\begin{remark}
				For any rig $A$ the associated bio $\biop_A$ contains the sub-bio \break $\F\subseteq
				\biop_A$ with 
				\begin{equation}
								\begin{array}[H]{ll}
								\mathbb{F}^{-}(n) &= \left\{ 0=(0,\cdots , 0), \delta_{1}=(1,0,\cdots , 0),
								\cdots , \delta_{n}=(0,\cdots 0,1) \right\} \\\\
								\mathbb{F}^{+}(n) &= \left\{ 0^{t}=
																\left(\begin{matrix}
																								0 \\ \vdots \\ 0
																				\end{matrix}\right) , 
																				\delta_{1}^{t} = 
																								\left(\begin{matrix}
																								1 \\ 0 \\ \vdots \\ 0
																				\end{matrix}\right) ,  \cdots , 
																				\delta_{n}^{t}=
																				\left(\begin{matrix}
																								0 \\ \vdots \\ 0 \\ 1
																				\end{matrix}\right)
								\right\}
								\end{array}
								\label{eq:2.16}
				\end{equation}
				the (co)-vectors with at most one $1$ coordinate (note that $\mathbb{F}$ is closed
				under the composition and action operations); we call $\mathbb{F}$
				``\myemph{the field with one element}''. 
				\label{remark:2.16}
\end{remark}
\section{The $\ell_{p}$ bios and the real and complex ``integers''} \label{sec:3}
Fix $p,q\in [1,\infty]$, with $1/p + 1/q =1$. \\ 
We have the operad
$\biop_{\ell_p}\subseteq \biop_{\R}^{-}$, with 
\begin{equation}
        \begin{array}[H]{l}
		\biop_{\ell_p}(n):= \left\{ (x_1,\cdots, x_{n})\in \biop_{\R}(n)\equiv \R^{n},
		|x_1|^{p}+\cdots + |x_{n}|^{p}\le 1 \right\} \\\\
		(x_1,\cdots ,x_n)\circ_i (y_1,\cdots , y_m):=(x_1,\cdots, x_{i-1},x_{i}\cdot
		y_1,\ldots , x_i\cdot y_m,x_{i+1},\cdots , x_n)
	\end{array}
	\label{eq:3.1}
\end{equation}
We write the elements of $\biop_{\ell_q}$ as column vector, and we have the mutual actions
induced from $\biop_{\R}$,
\begin{equation}
				(a_1\cdots a_{k}\cdots a_n)\circleftarrow_{k}
				\begin{pmatrix}
								b_0\\b_1\\\vdots\\b_{m-k}				
				\end{pmatrix}
				:= (a_1\cdots a_{k-1},a_k\cdot b_0+\cdots
								+a_{k+j}\cdot b_{j}+\cdots + a_{m}\cdot b_{m-k},a_{m+1},\cdots ,
				a_{n})
				\label{eq:3.2}
\end{equation}
and similarly for $\circrightarrow$. Thus we have a $\text{sub}\text{-bio}\,(\biop_{\ell_p},\biop_{\ell_q})\subseteq \biop_{\R}$. \\
The rig $\R$, being commutative has an (identity) involution, hence $\biop_{\R}$ has an
involution 
\begin{equation}
				(a_1\cdots a_n)^{t}:=
				\begin{pmatrix}
								a_1\\\vdots\\ a_n
				\end{pmatrix}
				\label{3.3}
\end{equation}
The sub-bio $\left( \biop_{\ell_{p}}, \biop_{\ell_{q}} \right)\subseteq \left(
				\biop_{\R}^{-},\biop_{\R}^{+}
\right)$ is stable under this involution iff $p=q=2$, and we are in the self-dual case
\begin{equation}
				\biop_{\ell_2}(n)\equiv \left\{ (x_1,\cdots , x_{n})\in \R^n, x_1^{2}+\cdots
				+x_n^{2}\le 1 \right\}
				\label{eq:3.4}
\end{equation} the $\ell_2$-unit ball. \vspace{.1cm}\\
\noindent The bio $\mathbb{Z}_{\mathbb{R}}:=(\biop_{\ell_2},\biop_{\ell_2})\subseteq \biop_{\R}$ is the ``Real integers'',
analogue at the ``Real prime'' of the $p$-adic integers $\Z_{p}\subseteq
\Q_{p}$, $p$ a prime. We similarly have the ``complex integers'' $\Z_{\C}\subseteq
\biop_{\C}$ given by the unit $\ell_{2}$ complex balls. \\
Contracting the inside of the unit $\ell_{2}$-balls to a point we get quotient bios 
\begin{equation}
				\begin{array}[H]{lll}
								\Z_{\R}\xtworightarrow{\quad} \F_{\R}\quad &, \quad & \F_{\R}^{\pm}(n)\equiv
				S^{n-1}\sqcup \left\{ 0 \right\} \\\\ 
				\Z_{\C}\xtworightarrow{\quad} \F_{\C} &,  & \F_{\C}^{\pm}(n)\equiv
				S^{2n-1}\sqcup \left\{ 0 \right\} 
				\end{array}
				\label{eq:3.5}
\end{equation}
We now have a complete picture of the arithmetical bios 
\begin{align}
				\begin{array}[H]{l}
				 \begin{tikzpicture}
								\node at (50mm,35mm) {$\biop_{\C}\;\supseteq\;\Z_{\C}\;\xtworightarrow{\qquad}\F_{\C}$};
								\node at (36mm,30mm) {\rotatebox{90}{\large$\subseteq$}};
								\node at (47mm,30mm) {\rotatebox{90}{\large$\subseteq$}};
								\node at (64mm,30mm) {\rotatebox{90}{\large$\subseteq$}};
								\node at (36mm,20mm) {\rotatebox{90}{\large$\subseteq$}};
								\node at (50mm,25mm) {$\biop_{\R}\;\supseteq\;\Z_{\R}\;\xtworightarrow{\qquad}\F_{\R}$};
								\node at (31mm,15mm) {$\biop_{\Z}\;\subseteq\; \biop_{\Q}$};
								\node at (22mm,05mm) {$\biop_{\F_{p}} \xtwoleftarrow{\qquad}\; \biop_{\Z_p}\,\subseteq\; \biop_{\Q_p}$};
								\node at (24.5mm,10mm) {\rotatebox{270}{\large$\subseteq$}};
								\node at (36mm,10mm) {\rotatebox{270}{\large$\subseteq$}};
				\end{tikzpicture}
				\end{array}
				\label{eq:3.6}
\end{align}
\section{Commutativity} \label{sec:4}
We say the bio $\biop=(\biop^{-},\biop^{+})$ is \myemph{commutative} if the actions and
compositions \myemph{interchange}:
for $\tilde{p}\in\biop^{-}(\ell)$, $q\in \biop^{+}(m)$,
$p\in \biop^{-}(n)$, 
\begin{equation}
				\left( \tilde{p} \overleftarrow{\circ}_{\hspace{-1mm}j}\,q 
				\right){\circ}_{j} p\equiv ( \tilde{p} \circ
								\underbrace{(p,\cdots, p)}_{m})
								\sigma_{m,n}\circleftarrow \underbrace{\left( q,\cdots ,
								q \right)}_{n}
								\label{eq:4.1}
\end{equation}
and similarly,
\begin{equation}
				q \circ_{j} \left( p \overrightarrow{\circ}_{\hspace{-1mm}j}\, \tilde{q} \right) \equiv
				(\underbrace{p,\cdots , p}_{n}) \circrightarrow
				\sigma_{n,m} ( (\underbrace{q,\cdots ,
				q}_{m})\circ \tilde{q})
				\label{eq:4.2}
\end{equation}
for $\tilde{q}\in \biop^{+}(\ell)$, $p\in \biop^{-}(m)$, $q\in
\biop^{+}(n)$. \\
Pictorially, 
\begin{equation*}
				\begin{array}[H]{l}
				\begin{tikzpicture}[baseline]
								  \def\mline#1#2#3{
												\draw[decorate,decoration={markings,mark=at position #3 with {\arrow[color=black]{<}}}] #1--#2;
												\draw  #1--#2;
							   };
								 \mline{(0mm,20mm)}{(5mm,20mm)}{0.5};
								 \draw (8mm,20mm) circle [radius=3mm];
								 \node at (8mm,20mm) {$\tilde{p}$};
								 \mline{(9mm,23mm)}{(15mm,30mm)}{0.5};
								 \mline{(8.5mm,23mm)}{(12mm,30mm)}{0.5};
								 \mline{(11mm,20mm)}{(20mm,20mm)}{0.5};
								 \mline{(9mm,17mm)}{(15mm,10mm)}{0.5};
								 \draw[-] plot[smooth ] coordinates {(10.8mm,21mm)(15.4mm,23mm)(20mm,21mm) };
								 \draw[decorate,decoration={markings,mark=at position 0.5 with {\arrow[color=black] {<}}}] plot[smooth ] coordinates { (10.8mm,21mm)(15.4mm,23mm)(20mm,21mm)};
								 \draw[decorate,decoration={markings,mark=at position 0.5 with {\arrow[color=black] {<}}}] plot[smooth ] coordinates {(10.8mm,19mm)(15.4mm,17mm)(20mm,19mm) };
								 \draw plot[smooth ] coordinates {(10.8mm,19mm)(15.4mm,17mm)(20mm,19mm) };
								 \draw (23mm,20mm) circle [radius=3mm];
								 \node at (23mm,20mm) {$q$};
								 \draw
								 (40mm,17.5mm)--(40mm,22.5mm)--(34mm,22.5mm)--(34mm,17.5mm)--(40mm,17.5mm); 
								 \node at (37mm,20mm) {$p$};
								 \mline{(26mm,20mm)}{(34mm,20mm)}{0.5};
								 \mline{(40mm,22mm)}{(45mm,22mm)}{0.5};
								 \mline{(40mm,18mm)}{(45mm,18mm)}{0.5};
								 \draw[color=red,thick=3mm] (-1mm,9mm)--(46mm,9mm)--(46mm,31mm)--(-1mm,31mm)--(-1mm,9mm);
								 \node at (50mm,20mm) {$\equiv$};
								 \mline{(55mm,20mm)}{(60mm,20mm)}{0.5};
								 \draw (63mm,20mm) circle [radius=3mm];
								 \node at (63mm,20mm) {$\tilde{p}$};
								 \mline{(64mm,22.8mm)}{(70mm,30mm)}{0.5};
								 \mline{(65mm,22mm)}{(73mm,28mm)}{0.5};
								 \mline{(66mm,21mm)}{(75mm,25mm)}{0.5};
								 \mline{(66mm,20mm)}{(75mm,20mm)}{0.5};
								 \def\stag#1#2{
												 \draw
												 ($#1+(2mm,2mm)$)--($#1+(2mm,-2mm)$)--($#1+(-2mm,-2mm)$)--($#1+(-2mm,2mm)$)--($#1+(2mm,2mm)$);
												 \node at #1 {$#2$};
								 }
								 \stag{(77mm,25mm)}{p};
								 \stag{(77mm,20mm)}{p};
								 \stag{(77mm,15mm)}{p};
								 \mline{(65mm,17.8mm)}{(75mm,15mm)}{0.5};
								 \mline{(64mm,17.1mm)}{(73mm,10mm)}{0.5};
								 \draw (95mm,25mm) circle [radius=2.5mm];
								 \draw (95mm,15mm) circle [radius=2.5mm];
								 \node at (95mm,25mm) {$q$};
								 \node at (95mm,15mm) {$q$};
								 \mline{(79mm,26.5mm)}{(93mm,26.5mm)}{0.5};
								 \mline{(79mm,24mm)}{(93mm,16.5mm)}{0.07};
								 \mline{(79mm,21mm)}{(92.5mm,24.5mm)}{0.5};
								 \mline{(79mm,19mm)}{(92.5mm,15.5mm)}{0.5};
								 \mline{(79mm,13.5mm)}{(92.9mm,13.5mm)}{0.5};
								 \mline{(79mm,16mm)}{(92.9mm,23.5mm)}{0.18};
								 \mline{(97.5mm,25mm)}{(102mm,25mm)}{0.5};
								 \mline{(97.5mm,15mm)}{(102mm,15mm)}{0.5};
								 \draw[color=red,thick=3mm]
								 (54mm,9mm)--(104mm,9mm)--(104mm,31mm)--(54mm,31mm)--(54mm,9mm); 
				\end{tikzpicture}
				\end{array}
\end{equation*} 
Here $\sigma_{m,n}\in S_{m\cdot n}$ is the permutation:
\begin{align}
				\begin{array}[H]{l}
\begin{tikzpicture}[baseline=0mm, scale=1.8]
				 \coordinate (A1) at (15mm,5mm);
				 \coordinate (M1) at (15mm,10mm);
				 \coordinate (B1) at (15mm,15mm);
				 \coordinate (C1) at (5mm,10mm);
				 \coordinate (D1) at (2mm,10mm);
				 \node at (3.5mm,11mm) {$m$};
				 \draw[thick=2] (A1)--(B1)--(C1)--cycle;
				 \draw (C1)--(D1); 
				 \coordinate (A2) at (15mm,20mm);
				 \coordinate (M2) at (15mm,25mm);
				 \coordinate (B2) at (15mm,30mm);
				 \coordinate (C2) at (5mm,25mm);
				 \coordinate (D2) at (2mm,25mm);
				 \node at (3.5mm,26.5mm) {$i$};
				 \draw[thick=2] (A2)--(B2)--(C2)--cycle;
				 \draw (C2)--(D2); 
				 \coordinate (A3) at (15mm,35mm);
				 \coordinate (M3) at (15mm,40mm);
				 \coordinate (B3) at (15mm,45mm);
				 \coordinate (C3) at (5mm,40mm);
				 \coordinate (D3) at (2mm,40mm);
				 \node at (3.5mm,41.5mm) {$1$};
				 \draw[thick=2] (A3)--(B3)--(C3)--cycle;
				 \draw (C3)--(D3); 
				\tikzmath{
								let \x =20mm;
								let \y =2mm;
				};
				 \coordinate (AS1) at (15mm+\x,5mm);
				 \coordinate (MS1) at (15mm+\x,10mm);
				 \coordinate (BS1) at (15mm+\x,15mm);
				 \coordinate (CS1) at (25mm+\x,10mm);
				 \coordinate (DS1) at (28mm+\x,10mm);
				 \node at (26.5mm+\x,11mm) {$n$};
				 \draw[thick=2] (AS1)--(BS1)--(CS1)--cycle;
				 \draw (CS1)--(DS1); 
				 \coordinate (AS2) at (15mm+\x,20mm);
				 \coordinate (MS2) at (15mm+\x,25mm);
				 \coordinate (BS2) at (15mm+\x,30mm);
				 \coordinate (CS2) at (25mm+\x,25mm);
				 \coordinate (DS2) at (28mm+\x,25mm);
				 \node at (26.5mm+\x,26.5mm) {$j$};
				 \draw[thick=2] (AS2)--(BS2)--(CS2)--cycle;
				 \draw (CS2)--(DS2); 
				 \coordinate (AS3) at (15mm+\x,35mm);
				 \coordinate (MS3) at (15mm+\x,40mm);
				 \coordinate (BS3) at (15mm+\x,45mm);
				 \coordinate (CS3) at (25mm+\x,40mm);
				 \coordinate (DS3) at (28mm+\x,40mm);
				 \node at (26.5mm+\x,41.5mm) {$1$};
				 \draw[thick=2] (AS3)--(BS3)--(CS3)--cycle;
				 \draw (CS3)--(DS3); 
				 \draw (B3)--(BS3);
				 \draw (M3)--(BS2);
				 \draw (A3)--(BS1);
				 \draw (B2)--(MS3);
				 \draw (M2)--(MS2);
				 \draw (A2)--(MS1);
				 \draw (B1)--(AS3);
				 \draw (M1)--(AS2);
				 \draw (A1)--(AS1);
				 \node[yshift=-2mm,xshift=1.8mm] at (A1) {$n$};
				 \node[yshift=4mm,xshift=1.8mm] at (A1) {$\vdots$};
				 \node[yshift=-1mm,xshift=1.8mm] at (M1) {$j$};
				 \node[yshift=5.5mm,xshift=1.8mm] at (M1) {$\vdots$};
				 \node[yshift=0mm,xshift=1.8mm] at (B1) {$1$};
				 \node[yshift=1mm,xshift=1.8mm] at (A2) {$n$};
				 \node[yshift=7mm,xshift=1.8mm] at (A2) {$\vdots$};
				 \node[yshift=2mm,xshift=1.8mm] at (M2) {$j$};
				 \node[yshift=8mm,xshift=1.8mm] at (M2) {$\vdots$};
				 \node[yshift=2.5mm,xshift=1.8mm] at (B2) {$1$};
				 \node[yshift=0mm,xshift=1.8mm] at (A3) {$n$};
				 \node[yshift=6mm,xshift=1.8mm] at (A3) {$\vdots$};
				 \node[yshift=1mm,xshift=1.8mm] at (M3) {$j$};
				 \node[yshift=7mm,xshift=1.8mm] at (M3) {$\vdots$};
				 \node[yshift=2.5mm,xshift=1.8mm] at (B3) {$1$};
				 \node[yshift=-2mm,xshift=-1.8mm] at (AS1) {$m$};
				 \node[yshift=4.5mm,xshift=-1.8mm] at (AS1) {$\vdots$};
				 \node[yshift=-1mm,xshift=-1.8mm] at (MS1) {$i$};
				 \node[yshift=5.5mm,xshift=-1.8mm] at (MS1) {$\vdots$};
				 \node[yshift=0mm,xshift=-1.8mm] at (BS1) {$1$};
				 \node[yshift=1mm,xshift=-1.8mm] at (AS2) {$m$};
				 \node[yshift=7mm,xshift=-1.8mm] at (AS2) {$\vdots$};
				 \node[yshift=2mm,xshift=-1.8mm] at (MS2) {$i$};
				 \node[yshift=8mm,xshift=-1.8mm] at (MS2) {$\vdots$};
				 \node[yshift=2.5mm,xshift=-1.8mm] at (BS2) {$1$};
				 \node[yshift=0mm,xshift=-1.8mm] at (AS3) {$m$};
				 \node[yshift=6mm,xshift=-1.8mm] at (AS3) {$\vdots$};
				 \node[yshift=1mm,xshift=-1.8mm] at (MS3) {$i$};
				 \node[yshift=7mm,xshift=-1.8mm] at (MS3) {$\vdots$};
				 \node[yshift=2.5mm,xshift=-1.8mm] at (BS3) {$1$};
         \node at (25mm,0mm) {$(i-1)\cdot n+j = \sigma_{m,n} \big(\left(j-1  \right)m+i\big)$};
\end{tikzpicture}
				\end{array}
				\label{eq:4.3}
\end{align}

\noindent We let $\text{CBio}\subseteq \text{Bio}$ denote the full subcategory of
commutative bios. \vspace{.1cm}\\
For a commutative bio $\biop$ the underlying (associative, unital) monoid $\biop(1)$ is
commutative. Moreover, $\biop(1)$ acts \myemph{centrally} on $\biop^{\pm}(n)$: for $a\in
\biop(1)$
\begin{equation}
				a\circ p = p \circ(a,\cdots , a)\;\;,\;\; p\in\biop^{-}(n)\;\; ; \;\; q\circ a =
				(a,\cdots a)\circ q\;\; , \;\; q\in \biop^{+}(n)
				\label{eq:4.4}
\end{equation}
For a \myemph{multiplicative subset} $S\subseteq\biop(1)$ 
\begin{equation}
				s_{1},s_{2}\in S \Longrightarrow  s_{1}\circ s_{2}\in S,\;\; \unit\in S, 
				\label{eq:4.5}
\end{equation}
we have the \myemph{localization} of $\biop$ at $S$, $S^{-1}\biop$:
\begin{equation}
				\begin{array}[H]{l}
				S^{-1}\biop \equiv \biop[S^{-1}]\equiv (S^{-1}\biop^{-},S^{-1}\biop^{+}) \\\\
				S^{-1}\biop^{\pm}= \biop^{\pm}\times S /\sim 
				\end{array}
				\label{eq:4.6}
\end{equation}
with $(P^{\prime\prime},s^{\prime\prime})\sim (P^{\prime},s^{\prime})$ iff $s\circ
s^{\prime}\circ P^{\prime\prime} = s\circ
s^{\prime\prime}\circ P^{\prime}$ for some $s\in S$; one writes
$P/s=\frac{1}{s}\circ P$ for the equivalence class $(P,s)/\sim$. \vspace{.2cm}\\
For a commutative rig
$A$ the bio $\biop_{A}$ is commutative; this is true in particular for commutative
rings $A$, and we shall continue and define ideals, primes, spectrum, schemes etc. for
commutative bios making sure that we get the right definitions for
$\biop=\biop_{A}$, $A\in C\text{Ring}$.
\begin{remark}
				We denote by $\ctbio\subseteq \cbio$ the full sub-categorys consisting of the
				``totally-commutative'' bios $\biop=(\biop^{-},\biop^{+})$, which, in addition to
				commutativity, satisfy the Boardman-Vogt interchange: For $p\in\biop^{-}(n)$,
				$p^{\prime}\in\biop^{-}(m)$, 
				\begin{equation*}
								p\circ (\underbrace{p^{\prime},\cdots
								,p^{\prime}}_{n})=p^{\prime}\circ(\underbrace{p,\cdots p}_{m})\circ \sigma_{m,n}
				\end{equation*}
				and symmetrically, for $q\in\biop^{+}(n)$, $q^{\prime}\in\biop^{+}(m)$,
				\begin{equation*}
								 (\underbrace{q^{\prime},\cdots
								,q^{\prime}}_{n})\circ q=\sigma_{n,m}\circ(\underbrace{q,\cdots q}_{m})\circ
								q^{\prime} 
				\end{equation*}
				\label{remark:4.7}
\end{remark}
We have the full subcategories
\begin{equation}
				\cring\subseteq \crig \subseteq \ctbio \subseteq \cbio \subseteq \bio
				\label{eq:4.8}
\end{equation}
They are all complete and co-complete. All limits are created in Sets. Also directed co-limits
are created in Sets. These categories also have all co-limits, and in particular we have
push-outs for a diagram 
\begin{equation*}
				\begin{tikzpicture}[baseline=0mm, scale=1.4]
				 \coordinate (A1) at (5mm,5mm) ;
				 \coordinate (B0) at (5mm,13mm) ;
				 \coordinate (B1) at (7mm,15mm) ;
				 \coordinate (A0) at (15mm,15mm) ;
				 \draw[->] (B0)--(A1);
				 \draw[->] (B1)--(A0);
				 \node at (5mm,15mm) {$B$};
				 \node at (5mm,3mm) {$A_1$};
				 \node at (17mm,15mm) {$A_0$};
				\end{tikzpicture}
\end{equation*}
which will be denoted respectively by 
\begin{equation}
		A_0 \underset{B}{\otimes} A_1, \quad A_0 \underset{B}{\otimes} A_1, \quad A_0\underset{B}{\boxtimes}^{T}A_{1},\quad
	A_0\underset{B}{\boxtimes}A_1,\quad A_{0}\underset{B}{\amalg}A_1\quad 
	\label{eq:4.9}
\end{equation}
for the categories of (\ref{eq:4.8}). We have
\begin{equation}
A_0\boxtimes_BA_1=\left(A_0\underset{B}{\amalg}A_1\right)^C\quad , \quad
A_0\underset{B}{\boxtimes}^TA_1 = \left(A_0\underset{B}{\boxtimes}A_1\right)^T,
  \label{eq:4.10}
\end{equation}
where
\begin{equation}
				A\xtworightarrow{\qquad} A^{C}\quad , \text{resp.}\;\;
				A\xtworightarrow{\qquad}A^{T}, 
  \label{eq:4.11}
\end{equation}
is the maximal (resp. totally) commutative quotient of $A$ giving the left adjoint of the
embedding $\cbio\subseteq \bio$, resp. $\ctbio\subseteq \cbio$. 
\begin{remark}
				For (the initial object) $\Z\in C\text{Ring}\subseteq C\text{Rig}$, the
				``arithmetical surface''
				\begin{equation*}
          \text{spec}\left(\biop_{\Z}\boxtimes_{\F} \biop_{\Z}\right) \equiv
          \text{spec}\left( \biop_{\Z} \right)\prod\limits_{\text{spec}(\F)} \text{spec}\left( \biop_{\Z} \right)
				\end{equation*}
				does not reduce to its diagonal, as it does in classical algebraic geometry where 
				\begin{equation*}
								\text{spec}(\Z)\prod \text{spec}(\Z) = \text{spec}(\Z\otimes\Z) =
								\text{spec}(\Z) = \left\{ (0); (2),(3),(5),(7)\cdots \right\}.
				\end{equation*}
				\label{remark:7.9}
\end{remark}
Taking $B=\F$,
$A_{i}=\Z$, in (\ref{eq:4.9}) we get
\[
  \Z \smallcoprod{\F}\Z\xtworightarrow{\;\;\;}
  \Z\boxtimes_{\F}\Z\xtworightarrow{\;\;\;}\Z\boxtimes_{\F}^{T}\Z
  \xrightarrow{\;\overset{\nabla}{\sim}\;}\Z.
\]
If we require total-commutativity the arithmetical surface again reduces to its
diagonal, and $\Z\boxtimes_{\F}^T\Z=\Z$. Indeed, $\Z$ is generated over
$\F$ by $v=(1,1)$, $v^{t}= \begin{pmatrix}1\\ 1
\end{pmatrix} $, (and $(-1)$), and total commutativity implies 
\begin{equation}
        \begin{array}[H]{ll}
          (v\boxtimes 1)\,\circ \hspace{-2.5mm}&(1\boxtimes v, 1\boxtimes v)
          \circ \big((1,0),(0,1) \big) =
          (1\boxtimes v)\circ (v\boxtimes 1 , v\boxtimes 1 )\circ \big( (1,0),(0,1)\big) \\\\
          &\implies  v\boxtimes 1 = 1 \boxtimes v
	\end{array}
	\label{eq:4.12}
\end{equation}
pictorially
\begin{equation}
				\begin{aligned}
				\begin{tikzpicture}[baseline=0mm]
				\draw (2mm,10mm)--(5mm,10mm);
				\draw (5mm,10mm)--(10mm,15mm);
				\draw (5mm,10mm)--(10mm,5mm);
				\draw[dotted] (10mm,15mm)--(17mm,18mm);
				\draw[dotted] (10mm,5mm)--(17mm,2mm);
				\draw[dotted] (10mm,5mm)--(17mm,8mm);
				\draw[dotted] (10mm,15mm)--(17mm,13mm);
                                \node at (19mm,12.5mm)  {$0$};
                                \node at (19mm,17.5mm)  {$1$};
                                \node at (19mm,7.5mm)  {$1$};
                                \node at (19mm,2mm)  {$0$};
                                \node at (44mm,12.5mm)  {$0$};
                                \node at (44mm,17.5mm)  {$1$};
                                \node at (44mm,7.5mm)  {$0$};
                                \node at (44mm,2mm)  {$1$};
				\node at (23mm,10mm) {$\equiv$};
				\draw (17mm,8mm)--(18mm,8mm);
				\draw (17mm,2mm)--(18mm,2mm);
				\draw (17mm,13mm)--(18mm,13mm);
				\draw (17mm,18mm)--(18mm,18mm);

				\draw (27mm,10mm)--(30mm,10mm);
				\draw[dotted] (30mm,10mm)--(35mm,15mm);
				\draw[dotted] (30mm,10mm)--(35mm,5mm);
				\draw (35mm,15mm)--(42mm,18mm);
				\draw (35mm,5mm)--(42mm,2mm);
				\draw (35mm,5mm)--(42mm,8mm);
				\draw (35mm,15mm)--(42mm,13mm);
				\draw (42mm,8mm)--(43mm,8mm);
				\draw (42mm,2mm)--(43mm,2mm);
				\draw (42mm,13mm)--(43mm,13mm);
				\draw (42mm,18mm)--(43mm,18mm);
				\node at (48mm,10mm) {$\Longrightarrow$};
				
				\draw (53mm,10mm)--(55mm,10mm);
				\draw (55mm,10mm)--(60mm,15mm);
				\draw (55mm,10mm)--(60mm,5mm);

				\draw[color=blue,opacity=0.1] (44mm,0mm)--(44mm,20mm)--(0mm,20mm)--(0mm,0mm)-- cycle;
				\draw[color=blue,opacity=0.1] (80mm,0mm)--(80mm,20mm)--(52mm,20mm)--(52mm,0mm)-- cycle;
				\node at (64mm,10mm) {$\equiv$};
				\draw (68mm,10mm)--(70mm,10mm);
				\draw[dotted] (70mm,10mm)--(75mm,15mm);
				\draw[dotted] (70mm,10mm)--(75mm,5mm);
\end{tikzpicture}
				\end{aligned}
				\label{eq:4.13}
\end{equation}
\section{The spectrum} \label{sec:5}
We have the categories of ``\myemph{bios with zero}''
\begin{equation}
				\phantom{*}_\F \backslash\cbio \subseteq\cbio(\text{Set}_{0})\subseteq\cbio_{0}\subseteq\cbio
				\label{eq:5.1}
\end{equation}
where $\biop\in \cbio_0$ iff there exist $0\in\biop(1)$ with $0\circ x=0$ for all
$x\in \biop(1)$; such an element $0$ is unique; the maps $\varphi$ in $\cbio_0$ are required
to preserve it: $\varphi(0)=0$. 
Fix $\biop=(\biop^{-},\biop^{+})\in \text{CBio}_{0}$. 
\begin{definition}
				An \myemph{ideal} is a subset $\oneideal\subseteq\biop(1)$ such that for
				$a_{1}\cdots a_{n}\in \oneideal$, $b\in \biop^{-}(n)$, $d\in\biop^{+}(n)$, the
				\myemph{``linear combination''} 
				\begin{equation}
								\left\{ b,(a_{i}), d  \right\}\overset{\text{def}}{=} \left( b\circ
												(a_{i})
								\right)\circleftarrow d = b\circrightarrow \left( (a_{i})\circ d \right)\in
				\oneideal \;\;\; \text{is in $\oneideal $}
								\label{eq:5.3}
				\end{equation}
				\label{def:5.2}
\end{definition} \vspace{-5mm}
\noindent Note that intersection of ideals is again an ideal, so we can speak of the ideal $\oneideal$
generated by a set $\left\{ a_{i} \right\}_{i\in I}\subseteq\biop(1);\oneideal$ is also given as
the collection of all linear - combinations of the $a_{i}$'s (with repetition). \\
A proper ideal $\pid\subseteq\biop(1)$, with $1\not\in \pid$, is called \myemph{prime} if
$S_{\pid}=\biop(1)\setminus \pid$ is multiplicative. The set of primes is denoted
$\text{spec}(\biop)$.  \\
The set $\text{spec}(\biop)$ is not empty: a maximal proper ideal $1\not\in
\pid\subseteq\biop(1)$, which exists by Zorn's lemma, is always prime. Indeed, if
$a,a^{\prime}\in \biop(1)\setminus \pid$, then for some linear combinations 
\begin{equation*}
				\left\{ b,(a_{i})_{i\le n}, d \right\}=\unit = \left\{
				b^{\prime},(a^{\prime}_{j})_{j\le m},d^{\prime} \right\},b,b^{\prime}\in
				\biop^{-}, d,d^{\prime}\in\biop^{+}, 
\end{equation*}
$a_{i}\in \pid$ or $a_{i}=a$, $a_{j}^{\prime}\in \pid$ or 
$a_{j}^{\prime}=a^{\prime}$ and by commutativity we have
\begin{equation*}
				\begin{array}[H]{lll}
								1 = 1\circ 1 &=& \left(\left(b\circ (a_{i}) \right)\circleftarrow
								d\right)\circ \left(\left(
												b^{\prime}\circ (a_{j}^{\prime})
				\right)\circleftarrow d^{\prime}\right) \\\\
								&=& \left( b\circ (\underbrace{b^{\prime},\cdots b^{\prime}}_{n})
								\right)\sigma \circ
								(a_{i}\circ a_{j}^{\prime})\circleftarrow \sigma^{\prime}\left(
												(\underbrace{d,\cdots , d}_{m})\circ d^{\prime}
								\right)
				\end{array}
\end{equation*}
with $a_{i}\circ a_{j}^{\prime}\in \pid$ or $a_{i}\circ a_{j}^{\prime}= a\circ a^{\prime}$,
and so $a\circ a^{\prime}\not\in \pid$. \\
Moreover, $\text{spec}(\biop)$ is a (compact,
sober$\equiv$Zariski) topological space with respect to the \myemph{Zariski topology} with
closed sets
\begin{equation}
				\begin{array}[H]{l}
				V(\oneideal):= \left\{ \pid\in \text{spec}(\biop),\pid\supseteq \oneideal \right\} \quad ,
				\quad \oneideal\subseteq \biop(1)\; \text{ideal}, \\\\
				V(\oneideal)\cap V(\oneideal^{\prime})=V(\oneideal\cdot \oneideal^{\prime})\quad ,
				\quad \oneideal\cdot\oneideal^{\prime}=\text{the ideal generated by $\left\{
												a\cdot a^{\prime},a\in \oneideal, a^{\prime}\in
												\oneideal^{\prime}
				\right\}$}; \\\\
				\bigcup\limits_{i}V(\oneideal_{i})=V(\sum\limits_{i}\oneideal_{i}) \quad , \quad
				\sum\limits_{i}\oneideal_{i}\;\;\text{the ideal generated by
				$\cup_{i} \oneideal_{i}$}; \\\\
				V(0)=\text{spec}(\biop)\quad , \quad V(1)=\phi
				\end{array}
				\label{eq:5.4}
\end{equation}
We have a basis for the topology by \myemph{basic open sets}
\begin{equation}
				\begin{array}[H]{c}
				D(f):=\left\{ \pid\in \text{spec}(\biop), \pid\not\ni f \right\}\quad , \quad
				f\in\biop(1). \\\\
				D(f_1)\cap D(f_2)=D\left(f_1\circ f_2\right) \quad , \quad D(1)=\text{spec}(\biop)\quad ,
				\quad D(0)=\phi, \\\\
				\text{spec}(\biop)\setminus V(\oneideal)=\bigcup_{f\in \mathfrak{a}}D(f)
				\end{array}
				\label{eq:5.5}
\end{equation}
For a map of commutative bios $\varphi\in \text{CBio}(\biop,\bioq)$, the pull back of a
(prime) ideal is a (prime) ideal, and we obtain
\begin{equation}
				\begin{array}[H]{c}
				\text{spec}(\varphi)=\varphi^{\ast}: \text{spec}(\bioq)\longrightarrow
				\text{spec}(\biop) \\\\
				\varphi^{\ast}(\mathfrak{q})=\varphi_{1}^{-1}(\mathfrak{q})=\left\{
				a\in\biop(1),\varphi(a)\in \mathfrak{q} \right\}.
				\end{array}
				\label{eq:5.6}
\end{equation}
It is a continuous map: 
\begin{equation}
				\varphi^{\ast^{-1}}\big(V(\oneideal)\big) = \left\{ \mathfrak{q}\in
				\text{spec}(\bioq),\varphi^{-1}(\mathfrak{q})\supseteq \oneideal \right\} =
				V\big(\varphi(\oneideal)\big)
				\label{eq:5.7}
\end{equation}
and similarly the pull back of basic open set is again basic open 
\begin{equation}
				\varphi^{\ast^{-1}}\left( D(f) \right) = \left\{ \mathfrak{q}\in \text{spec}(\bioq),
				\varphi^{-1}(\mathfrak{q})\not\ni f \right\} = D\big(\varphi(f)\big). 
				\label{eq:5.8}
\end{equation}
Thus we have a functor 
\begin{equation}
				\text{spec}: (\text{CBio}_0)^{\text{op}} \longrightarrow \top \;
				\text{(compact, sober)}
				\label{eq:5.9}
\end{equation}
We have a Galois correspondence ($\equiv$ adjunction of order sets)
\begin{equation}
				\begin{tikzpicture}[baseline=5mm]
								 \node at (0mm,5mm) {$\left\{ \oneideal \subseteq \biop(1)\;\text{ideal} \right\}$};
								 \draw[->] plot[smooth ] coordinates {(15mm,6mm)(25mm,8mm)(35mm,6mm)};
								 \draw[<-] plot[smooth ] coordinates {(15mm,4mm)(25mm,2mm)(35mm,4mm)};
								 \node at (25mm,10mm) {$V$};
								 \node at (25mm,0mm) {$I$};
								 \node at (53mm,5mm) {$\left\{ Z\subseteq \text{spec}(\biop)\;\text{closed} \right\}$};
				\end{tikzpicture}
				\label{eq:5.10}
\end{equation}
\begin{equation*}
				\bigcap\limits_{\pid\in Z} \pid =: I(Z) \; \tikz \draw[<-|] (0mm,0mm)--(20mm,0mm);\; Z
\end{equation*}
We have $VI(Z)=Z$, and 
\begin{equation}
				IV(\oneideal)=\bigcap\limits_{\mathfrak{p}\supseteq \mathfrak{a}} \pid = \left\{ a\in \biop(1),
								a^{n}=\underbrace{a\circ\ldots \circ a }_{n} \in \oneideal\;\text{for $n>>0$}
\right\}=\sqrt{\oneideal}
\label{eq:5.11}
\end{equation}
We get induced bijections
\begin{equation}
				\begin{tikzpicture}[baseline=5mm]
								 \node at (35mm,6mm) {$\sim$};
								 \node at (10mm,5mm) {$\text{spec}(\biop)$}; 
								 \node at (80mm,5mm) {$\left\{ Z\subseteq\text{spec}(\biop)\;\text{closed irreducible} \right\}$}; 
								 \node at (10mm,15mm) {\rotatebox{90}{\Huge $\subseteq$}};
								 \node at (75mm,15mm) {\rotatebox{90}{\Huge $\subseteq$}};
								 \node at (80mm,23mm) {$\left\{ Z\subseteq\text{spec}(\biop)\;\text{closed } \right\}$}; 
								 \node at (10mm,23mm) {$\left\{ \oneideal\subseteq\biop(1),\oneideal=\sqrt{\oneideal} \right\}$}; 
								 \draw[->] (30mm,24mm)--(60mm,24mm);
								 \draw[arrows={<-}] (30mm,21mm)--(60mm,21mm);
								 \draw[arrows={<->}] (20mm,5mm)--(50mm,5mm);
								 \node at (45mm,26mm) {$V$};
								 \node at (45mm,22mm) {$\sim$};
								 \node at (45mm,19mm) {$I$};
				\end{tikzpicture}
				\label{eq:5.12}
\end{equation}
For a multiplicative set $S\subseteq \biop(1)$, let $\phi_{S}:\biop\rightarrow
S^{-1}\biop$, $\phi_{S}(P)=P/1$, be the canonical map, it induces a homeomorphism 
\begin{equation}
				\begin{tikzpicture}[baseline=5mm]
								 \node at (0mm,5mm) {$\left\{ \pid\in \text{spec}(\biop), \pid\cap S=\phi \right\}$};
								 \draw[<-] plot[smooth ] coordinates {(20mm,6mm)(32.5mm,8mm)(45mm,6mm)};
								 \draw[->] plot[smooth ] coordinates {(20mm,4mm)(32.5mm,2mm)(45mm,4mm)};
								 \node at (32mm,11mm) {$\phi_{S}^{\ast}$};
								 \node at (32mm,0mm) {$S^{-1}$};
								 \node at (32mm,4mm) {$\sim$};
								 \node at (56mm,5mm) {$\text{spec}(S^{-1}\biop)$};
				\end{tikzpicture}
				\label{eq:5.13}
\end{equation}
In particular, for $S_{f}=\left\{ f^{\N} \right\}=\left\{ f^{n},n\ge 0 \right\}$,
$f\in\biop(1)$, we get 
\begin{equation}
				\text{spec}(\biop)\supseteq
				D(f)\xleftrightarrow[\qquad]{\sim}\text{spec}(\biop_{f}), \qquad
				\biop_{f}:= \left\{ f^{\N} \right\}^{-1}\biop
				\label{eq:5.14}
\end{equation}
For $S_{\pid}=\biop(1)\setminus \pid$, $\pid$ prime, we get 
\begin{equation}
				\left\{ q\in\text{spec}(\biop),q\subseteq \pid \right\}
				\xleftrightarrow[\qquad]{\sim}\text{spec}(\biop_{\pid}),\;\biop_{\pid}:=S_{\pid}^{-1}\biop
				\label{eq:5.15}
\end{equation}
i.e. $\biop_{\pid}\in \text{CBio}_{\text{loc}}$ is a \myemph{local} bio in the sense that
\begin{equation}
				m_{\pid}:=\biop_{\pid}(1)\setminus \left\{ x\in \biop_{\pid}(1),\exists x^{-1}\;\text{with}\;
				x\circ x^{-1}=1 \right\}=S_{\pid}^{-1}(\pid)
				\label{eq:5.16}
\end{equation}
is the unique maximal ideal of $\biop_{\pid}$. \\
We make $\text{CBio}_{\text{loc}}$ into a category by 
\begin{equation}
				\text{CBio}_{\text{loc}}(\biop,\bioq)=\left\{ \varphi\in
				\text{CBio}_0(\biop,\bioq),\; \varphi\;\text{is local:
$\varphi^{\ast}(m_{\bioq})=m_{\biop}$} \right\}
\label{eq:5.17}
\end{equation}
\section{The structure sheaf $\mathscr{O}_{\biop}$} \label{sec:6}
Fix $\biop\in \text{CBio}_0$. For an open set $\mathscr{U}\subseteq \text{spec}(\biop)$ define 
\begin{equation}
  \mathscr{O}^{\pm}_{\biop}(\mathscr{U})(n):=\left\{ f:\mathscr{U}\to\coprod\limits_{\pid\in
								\mathscr{U}}\biop^{\pm}_{\mathfrak{p}}(n),
								f(\mathfrak{p})\in\biop^{\pm}_{\mathfrak{p}}(n), \;\text{and $f$ is a
\myemph{local fraction}} \right\}	
\label{eq:6.1}
\end{equation}
$f$ \myemph{locally a fraction}: for all $\mathfrak{p}\in \mathscr{U}$, there exists open
$\mathfrak{p}\in
\mathscr{U}_{\mathfrak{p}}\subseteq \mathscr{U}$, and $P\in \biop^{\pm}(n)$, $s\in \biop(1)\setminus
\bigcup\limits_{\mathfrak{q}\in \mathscr{U}_{\mathfrak{p}}} \mathfrak{q}$, such that for all
$\mathfrak{q}\in \mathscr{U}_{\mathfrak{p}}$ we have $f(\mathfrak{q})\equiv
P/ s $ in $\biop_{\mathfrak{q}}$. \\
Note that $\mathscr{O}_{\biop}(\mathscr{U})=\left(
\mathscr{O}_{\biop}^{-}(\mathscr{U}),\mathscr{O}_{\biop}^{+}(\mathscr{U}) \right)$ is a
commutative bio, the operations of compositions and actions are defined
pointwise (in each $\biop_{\mathfrak{p}}$), and the
``local fraction condition'' is preserved. For open sets $\mathscr{V}\subseteq \mathscr{U}\subseteq
\text{spec}(\biop)$ we have the restriction maps 
\begin{equation}
				\rho_{\mathscr{V}}^{\mathscr{U}}: \biop(\mathscr{U})\to \biop(\mathscr{V})
				\label{eq:5.19}
\end{equation}
making $\mathscr{U}\mapsto \biop(\mathscr{U})$ a pre-sheaf of bios over $\text{spec}(\biop)$; by the local
nature of the ``locally-fraction-condition'' it is clearly a sheaf. \\
For $\mathfrak{p}\in \text{spec}(\biop)$, the \myemph{stalk} of $\mathscr{O}_{\biop}$ at
$\mathfrak{p}$ is given by 
\begin{equation}
				\begin{array}[H]{c}
								\mathscr{O}_{\biop, \mathfrak{p}}:=
								\limrightarrow\limits_{\mathscr{U}\ni
								\mathfrak{p}}\mathscr{O}_{\biop}(\mathscr{U})\xrightarrow{\sim}\biop_{\mathfrak{p}} \\\\
								(\mathscr{U},f)_{/\approx} \xmapsto{\quad} f(\mathfrak{p})
				\end{array}
				\label{eq:6.3}
\end{equation}
it is well defined, surjective, and injective. \\
The global sections of $\mathscr{O}_{\biop}$ are given by 
\begin{theorem}
				For a basic open set $D(s)\subseteq \text{spec}(\biop)$, $s\in \biop(1)$,
				\begin{equation*}
								\begin{array}[H]{c}
								\Psi:\biop_{s}=\biop\left[ \frac{1}{s} \right]\xrightarrow{\quad\sim\quad}
								\mathscr{O}_{\biop}\left( D(s) \right) \\\\
								\Psi\left( P/s^{n} \right):= \left\{ f(\mathfrak{p})\equiv P/s^{n} \; \text{in
								$\biop_{\mathfrak{p}}$ for all $\mathfrak{p}\in D(s)$} \right\}.
								\end{array}
				\end{equation*}
				\label{thm:1}
\end{theorem}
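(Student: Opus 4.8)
The plan is to imitate Grothendieck's proof of the affine case, the only genuinely new ingredient being the combinatorics of ``linear combinations'' of elements of $\biop^{\pm}(n)$ inside a commutative bio. It suffices to treat the $\biop^{-}$ components, the $\biop^{+}$ case being symmetric. First I would check that $\Psi$ is well defined and is a map of bios: for $P\in\biop^{-}(n)$ and $\mathfrak p\in D(s)$ one has $s\notin\mathfrak p$, so $s$ is invertible in $\biop_{\mathfrak p}$, the assignment $\mathfrak p\mapsto P/s^{m}$ is the single fraction $P/s^{m}$ on all of $D(s)$ and hence a local fraction, and if $P/s^{m}=P'/s^{\ell}$ in $\biop_{s}$ then by (\ref{eq:4.6}) $s^{k}\circ s^{\ell}\circ P=s^{k}\circ s^{m}\circ P'$ for some $k$, so the two local sections agree in every $\biop_{\mathfrak p}$. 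Functoriality (preservation of units, compositions, actions) is immediate since all operations on $\mathscr O_{\biop}(D(s))$ are computed stalkwise and $\biop\to\biop_{\mathfrak p}$ factors through $\biop_{s}$. The content is therefore the bijectivity of $\Psi$.

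For injectivity, suppose $\Psi(P/s^{m})=\Psi(P'/s^{\ell})$. For each $\mathfrak p\in D(s)$, equality in $\biop_{\mathfrak p}$ gives by (\ref{eq:4.6}) an element $t_{\mathfrak p}\in\biop(1)\setminus\mathfrak p$ with $t_{\mathfrak p}\circ s^{\ell}\circ P=t_{\mathfrak p}\circ s^{m}\circ P'$ (the $\biop(1)$-action on $\biop^{-}(n)$ being the central one of (\ref{eq:4.4})). I would then set $\oneideal:=\{a\in\biop(1):a\circ s^{\ell}\circ P=a\circ s^{m}\circ P'\}$ and show it is an ideal, by carrying $s^{\ell}\circ P$ (resp.\ $s^{m}\circ P'$) through an arbitrary linear combination $\{b,(a_{i}),d\}$ of its members using centrality (\ref{eq:4.4}), the naturality identities (\ref{eq:1.9})--(\ref{eq:1.12}) and the commutativity identities (\ref{eq:4.1})--(\ref{eq:4.2}), which reduces the two sides to the \emph{same} combination of the equal elements $a_{i}\circ s^{\ell}\circ P=a_{i}\circ s^{m}\circ P'$. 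Since $\oneideal\not\subseteq\mathfrak p$ for every $\mathfrak p\in D(s)$ we get $V(\oneideal)\cap D(s)=\emptyset$, so every prime containing $\oneideal$ contains $s$, i.e.\ $s\in IV(\oneideal)=\sqrt{\oneideal}$ by (\ref{eq:5.11}); hence $s^{N}\in\oneideal$ for some $N$, giving $s^{N+\ell}\circ P=s^{N+m}\circ P'$, which is precisely $P/s^{m}=P'/s^{\ell}$ in $\biop_{s}$.

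For surjectivity, take $f\in\mathscr O^{-}_{\biop}(D(s))(n)$. Using the local-fraction condition and the fact that the $D(f)$ form a basis (\ref{eq:5.5}), I would cover $D(s)$ by sets $D(g_{i})$ with $f=P_{i}/t_{i}$ on $D(g_{i})$ and $D(g_{i})\subseteq D(t_{i})$; the inclusion forces $g_{i}\in\sqrt{(t_{i})}$, and since the ideal generated by a single element $t_{i}$ equals $t_{i}\circ\biop(1)$ (by (\ref{eq:4.4}) and (\ref{eq:1.9})) we get $g_{i}^{N_{i}}=t_{i}\circ c_{i}$, so after replacing $P_{i}/t_{i}$ by $(c_{i}\circ P_{i})/g_{i}^{N_{i}}$ and $g_{i}$ by $g_{i}^{N_{i}}$ we may assume $f=P_{i}/g_{i}$ on $D(g_{i})$. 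Since $D(s)\cong\text{spec}(\biop_{s})$ by (\ref{eq:5.14}) is compact, finitely many $D(g_{1}),\dots,D(g_{r})$ suffice. On $D(g_{i})\cap D(g_{j})=D(g_{i}\circ g_{j})$ both $P_{i}/g_{i}$ and $P_{j}/g_{j}$ equal $f$, so injectivity yields $(g_{i}\circ g_{j})^{M}\circ g_{j}\circ P_{i}=(g_{i}\circ g_{j})^{M}\circ g_{i}\circ P_{j}$ for a common $M$; replacing $(P_{i},g_{i})$ by $(g_{i}^{M}\circ P_{i},\,g_{i}^{M+1})$ keeps $f=P_{i}/g_{i}$ on $D(g_{i})$ and gives $g_{j}\circ P_{i}=g_{i}\circ P_{j}$ for all $i,j$. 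From $D(s)\subseteq\bigcup_{i}D(g_{i})$ we obtain $s\in\sqrt{(g_{1},\dots,g_{r})}$, so $s^{L}=\{b,(g_{i_{1}},\dots,g_{i_{k}}),d\}$ for suitable $b\in\biop^{-}(k)$, $d\in\biop^{+}(k)$ and indices in $\{1,\dots,r\}$. I would then put $P:=\{b,(P_{i_{1}},\dots,P_{i_{k}}),d\}\in\biop^{-}(n)$, the same combination applied to the sections $P_{i_{\nu}}$ (built, in the shape of (\ref{eq:4.1}), from $b\circ(P_{i_{1}},\dots,P_{i_{k}})$, a shuffle permutation regrouping the $kn$ slots into $n$ blocks of $k$, and $\circleftarrow(d,\dots,d)$), and verify for each $j$ that $g_{j}\circ P=s^{L}\circ P_{j}$: distribute $g_{j}$ over the combination, substitute $g_{j}\circ P_{i_{\nu}}=g_{i_{\nu}}\circ P_{j}$, and factor $P_{j}$ back out, using (\ref{eq:4.4}) and (\ref{eq:1.9})--(\ref{eq:1.12}). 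This gives $P/s^{L}=P_{j}/g_{j}=f$ in $\biop_{\mathfrak q}$ for every $\mathfrak q\in D(g_{j})$, and since the $D(g_{j})$ cover $D(s)$ we conclude $\Psi(P/s^{L})=f$.

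The main obstacle is precisely the formal algebra invoked twice above: giving a clean definition of the ``linear combination'' $\{b,(P_{i}),d\}$ for $P_{i}\in\biop^{-}(n)$ out of composition, the symmetric-group action and $\circleftarrow$, and proving the three identities it obeys --- that $\oneideal$ is an ideal, that $g\circ\{b,(P_{i}),d\}=\{b,(g\circ P_{i}),d\}$, and that $\{b,(a_{i}\circ P),d\}=\{b,(a_{i}),d\}\circ P$. Each reduces to a diagram chase through the naturality and co-naturality axioms (\ref{eq:1.9})--(\ref{eq:1.12}) combined with the commutativity axioms (\ref{eq:4.1})--(\ref{eq:4.2}); this is exactly where the delicate commutativity condition of the Introduction is needed, and once it is in place the rest of the argument is Grothendieck's.
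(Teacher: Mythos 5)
Your proposal is correct and follows essentially the same route as the paper: the same annihilator ideal $\oneideal$ together with $V(\oneideal)\cap D(s)=\emptyset$ and $s\in IV(\oneideal)=\sqrt{\oneideal}$ for injectivity, and for surjectivity the same compactness argument, normalization to $f=P_i/s_i$ with $s_j\circ P_i=s_i\circ P_j$, and the glued section $P=b\circ(P_{i(j)})\circleftarrow(d,\dots,d)$ with $s_j\circ P=s^{M}\circ P_j$ verified via the commutativity axioms. Your extra care about the shuffle permutation in defining the linear combination $\{b,(P_i),d\}$ and about checking that $\oneideal$ is an ideal only makes explicit what the paper leaves to ``by commutativity.''
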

For $s=1$ we obtain the global sections 
\begin{equation*}
				\biop\xrightarrow{\quad \sim \quad}\mathscr{O}_{\biop}(\text{spec}(\biop))
\end{equation*}
\begin{proof}
				The map $\Psi$ which takes $P/s^{n}\in \biop_{s}$ to the constant section
				$f$, $f(\mathfrak{p})\equiv P/s^{n}$, is a well defined map of bios. Take $P\in
				\biop^{-}(k)$, the case $P\in\biop^{+}(k) $ is similar. \\
				\myemph{$\Psi$ is injective}: Assume $\Psi\left( P_{1}/s^{n_1}
				\right)=\Psi(P_2/s^{n_2})$, and define 
				\begin{equation*}
								\oneideal := \text{Ann}\left( s^{n_2}\circ P_{1} ; s^{n_1}\circ P_2
								\right)=\left\{ a\in \biop(1), a\circ s^{n_2}\circ P_{1}=a\circ
								s^{n_{1}}\circ P_2 \right\}; 
				\end{equation*}
				by commutativity it follows $\oneideal $ is an ideal. We have
				\begin{equation*}
								P_1/s^{n_1}=P_2/s^{n_2}\;\; \text{in}\;\; \biop_{\mathfrak{p}} \;\;\text{for all}\;\; \mathfrak{p}\in D(s)
				\end{equation*}
				\begin{description}
								\item[$\Rightarrow$ ]  $s_{\mathfrak{p}}\circ s^{n_2}\circ
												P_1=s_{\mathfrak{p}}\circ
												s^{n_1}\circ P_2$ for some $s_{\mathfrak{p}}\in \biop(1)\setminus
												\mathfrak{p}$,
												all $\mathfrak{p}\in D(s)$. 
								\item[$\Rightarrow$ ]  $\oneideal\not\subseteq\mathfrak{p}$, all
												$\mathfrak{p}\in
												D(s)$
								\item[$\Rightarrow$ ]  $V(\oneideal)\cap D(s)=\emptyset\Rightarrow
												V(\oneideal)\subseteq V(s)\Rightarrow s\in
												IV(\oneideal)=\sqrt{\oneideal}$ 
								\item[$\Rightarrow$ ] $s^{n}\in \oneideal$ for $n>>0 \Rightarrow
												s^{n+n_{2}}\circ P_{1}=s^{n+n_1}\circ P_{2}$ for $n>>0$
								\item[$\Rightarrow$ ] $P_1/s^{n_1}=P_2/s^{n_2}$ in $\biop_{s}$.
				\end{description}
				\myemph{$\Psi$ is surjective}: Fix $f\in
				\mathscr{O}_{\biop}(D(s))^{-}(k)$. Since $D(s)$ is compact we can cover it by
				finite collection of basic open sets 
				\begin{equation*}
								D(s)=D(s_{1})\cup\cdots\cup D(s_{N})
				\end{equation*}
				with 
				\begin{equation*}
								f(\mathfrak{p})\equiv P_{i}/t_{i} \;\text{for}\; \mathfrak{p} \in D(s_{i})\quad ;\quad
								t_{i}\in \biop(1)\setminus\bigcup\limits_{\mathfrak{p}\in D(s_{i})}\pid.
				\end{equation*}
				We have $V(t_{i})\subseteq V(s_{i})$, so $s_{i}^{n_i}=c_{i}\circ t_{i}$ for some
				$c_{i}\in \biop(1)$, and for $\mathfrak{p}\in D(s_{i})$, $f(\mathfrak{p})=P_{i}/t_{i}=c_{i}\circ
				P_{i}/s_{i}^{n_i}$. We can replace $s_{i}$ by $s_{i}^{n_i}$,
				($ D(s_{i})=D(s_{i}^{n_{i}})$), and $P_{i}$ by $c_{i}\circ P_{i}$, so 
				\begin{equation*}
								f(\mathfrak{p})\equiv P_{i}/s_{i}\quad \text{for}\; \mathfrak{p}\in D(s_{i}).
				\end{equation*}
				On the set $D(s_{i})\cap D(s_{j})=D(s_{i}\circ s_{j})$, the section $f$ is given
				by both $P_{i}/s_{i}$ and $P_{j}/s_{j}$, and by the injectivity  of $\Psi$ 
				\begin{equation*}
								(s_{i}\circ s_{j})^{n}\circ s_{j}\circ P_{i} = (s_{i}\circ
								s_{j})^{n}\circ s_{i}\circ P_{j}
				\end{equation*}
				By finiteness we may assume one $n$ works for all $i,j$. \vspace{.1cm}\\ 
				Replacing $s_{i}$ by
				$s_{i}^{n+1}$, and replacing $s_{i}^{n}\circ P_{i}$ by $P_{i}$, we may assume 
				\begin{equation*}
								\begin{array}[H]{l}
												f(\mathfrak{p})\equiv P_{i}/s_{i} \quad \text{for all
																$\mathfrak{p}\in
												D(s_{i})$} \\\\
												s_{j}\circ P_{i}= s_{i}\circ P_{j} \quad \text{all $i,j$.}
								\end{array}
				\end{equation*}
				Since $D(s)\subseteq \bigcup\limits_{i}D(s_{i})$ we have that some power
				$s^{M}$ is a linear-combination of the $s_{i}$
				\begin{equation*}
								\begin{array}[H]{l}
								s^{M}=\left\{ b,(c_{j}),d \right\} = b\circ (c_{j})\circleftarrow d \quad
								, \quad c_{j}=s_{i(j)}\quad ,\quad b,d\in \biop^{\pm}(\ell)\quad ,\\\\
								j\in \left\{ 1,\cdots , \ell \right\}\xrightarrow{i}\left\{
												1,\cdots, N
								\right\}.
								\end{array}
				\end{equation*}
				Define $P=b\circ (P_{i(j)})\circleftarrow (\underbrace{d,\ldots ,d}_{k})$. \\
				We have, 
				\begin{equation*}
								\begin{array}[H]{lll}
												s_{j}\circ P &=&  b\circ \left( s_{j}\circ P_{i(j)}
												\right)\circleftarrow (d,\cdots , d)\equiv  b\circ \left(
												s_{i(j)}\circ P_{j} \right)\circleftarrow (d,\cdots , d) \\\\
												&=& (b\circ (c_{j}\circ P_{j}))\circleftarrow (d,\cdots , d) \\\\
												&=& ((b\circ(c_{j}))\circleftarrow d)\circ P_{j}\qquad
												\text{by commutativity} \\\\
												&=& s^{M}\circ P_{j}
								\end{array}
				\end{equation*}
				Thus $f(\mathfrak{p})\equiv P_{j}/s_{j}\equiv P/s^{M}$ is constant. 
\end{proof}
\section{Generalized Schemes} \label{sec:7}
For a topological space $\xspace$, we have the category $\text{CBio}/_{\xspace}$ of
sheaves of $\text{CBio}_0$ over $\xspace$, its maps are natural transformations
$\varphi=\left\{ \varphi_{\mathscr{U}} \right\}$, $\varphi_{\mathscr{U}}\in
\text{CBio}_0(\biop(\mathscr{U}),\biop^{\prime}(\mathscr{U}))$. Putting all these categories together we have
the category $\text{CBio}/\top$: its object are pairs $(\xspace,\biop)$,
$\xspace\in \top$, $\biop\in \text{CBio}/\xspace$, and its maps
$f:(\xspace,\biop)\to(\xspace^{\prime},\biop^{\prime})$ are pairs $f\in
\top(\xspace,\xspace^{\prime})$ and $f^{\natural}\in
\text{CBio}/_{\xspace^{\prime}} (\biop^\prime,f_{\ast}\biop) $; explicitly, $f$ is a
continuous function, and for $\mathscr{U}\subseteq \xspace^{\prime}$ open, we have the map of bios
$f_{\mathscr{U}}^{\natural}\in \text{CBio}_0 \left( \biop^{\prime}(\mathscr{U}),\biop\left(
f^{-1}(\mathscr{U}) \right) \right)$,
these maps being compatible with restrictions. 
\begin{remark}
				For $f\in \top(\xspace,\xspace^{\prime})$ we have adjunction
\begin{figure}[H]
				\centering
				\begin{tikzpicture}[baseline=0mm]
								\node at (41.5mm,22mm) {$\text{CBio}/\xspace$};
								 \draw[<-] plot[smooth ] coordinates {(38mm,20mm)(35mm,14mm)(38mm,7mm)};
								 \draw[->] plot[smooth ] coordinates {(45mm,20mm)(48mm,14mm)(45mm,7mm)};
								 \node at (41.5mm,4mm) {$\text{CBio}/\xspace^{\prime}$};
								 \node at (32mm,14mm) {$f^{\ast}$};
								 \node at (51mm,14mm) {$f_{\ast}$};
								 \node at (0mm,12mm) {$f^{\ast}\biop^{\prime} = $ sheaf associated};
								 \node at (75mm,12mm) {$f_{\ast}\biop(\mathscr{U})=\biop(f^{-1}\mathscr{U})$ };
								 \node at (10mm,4mm) {to the pre-sheaf};
								 \node at (35mm,-7mm) {
																			$ \hspace{-6cm}  \xspace \supseteq \mathscr{V} \xmapsto{\qquad} \hspace{-10mm} \limrightarrow\limits_{
								\begin{array}[H]{l}
												\hspace{10mm} \mathscr{U}\subseteq \xspace^{\prime}\,\text{open} \\
												\hspace{10mm} f(\mathscr{V})\subseteq \mathscr{U}
								\end{array}
				} 
				\hspace{-7mm}\biop^{\prime}(\mathscr{U})
								 $};
				\end{tikzpicture}
\end{figure}
\noindent For a map $f\in \text{CBio}/{\top} \left(
(\xspace,\biop),(\xspace^{\prime},\biop^{\prime}) \right)$, and for a point $x\in
\xspace$, we get an induced map of stalks 
\begin{equation}
				f_{x}^{\natural}:\biop^{\prime}_{f(x)}= \limrightarrow\limits_{f(x)\in \mathscr{V}\subseteq
				\xspace^{\prime}}
				\biop^{\prime}(\mathscr{V})\xrightarrow{\qquad}\limrightarrow\limits_{f(x)\in
								\mathscr{V}\subseteq
				\xspace^{\prime}} \biop(f^{-1}\mathscr{V})\to \limrightarrow\limits_{x\in
								\mathscr{U}\subseteq
				\xspace}\biop(\mathscr{U})=\biop_{x} 
				\label{eq:7.2}
\end{equation}
\label{remark:7.1}
\end{remark}
\begin{definition}
				The category of \myemph{locally-bio-spaces}
				$\text{CBio}_{\text{loc}}/_{\top}$, is the category with object
				$(\xspace,\biop)\in \text{CBio}/_{\top}$, such that for all $x\in \xspace$
        the stalk $\biop_{x}$ is a \myemph{local} bio with the unique maximal proper ideal
				$m_{x}\subseteq\biop_{x}(1)$; the maps 
				$f\in \text{CBio}_{\text{loc}}/_{\top} \left(
				(\xspace,\biop),(\xspace^{\prime},\biop^{\prime}) \right) $ are maps $f\in
				\text{CBio}/_{\top}\left( (\xspace,\biop),(\xspace^{\prime},\biop^{\prime})
				\right)$ such that for all $x\in X$, $f^{\natural}_{x}\in \text{CBio}_{\text{loc}}\left(
				\biop^{\prime}_{f(x)},\biop_{x} \right)$ is a \myemph{local} map:
				$f_{x}^{\natural}(m_{f(x)})\subseteq m_{x}$.
				\label{def:7.3}
\end{definition}
\begin{theorem}
				We have the adjunction
\begin{figure}[H]
				\centering
				\begin{tikzpicture}[baseline=0mm]
								 \node at (41.5mm,23mm) {$\left(\text{CBio}_{\text{loc}}/_{\top}\right)^{\text{op}}$};
								 \draw[<-] plot[smooth ] coordinates {(38mm,20mm)(35mm,14mm)(38mm,7mm)};
								 \draw[->] plot[smooth ] coordinates {(45mm,20mm)(48mm,14mm)(45mm,7mm)};
								 \node at (41.5mm,4mm) {$\text{CBio}_0$};
								 \node at (6mm,14mm) {$\text{spec}(\biop):=\left(
								 \text{spec}(\biop),\mathscr{O}_{\biop} \right)$ \hspace{6mm} spec};
								 \node at (68mm,14mm) {$\Gamma$ \hspace{4mm} $\Gamma(\xspace, \biop):=\biop(\xspace)$};
								 \node at (70mm,9mm) {the global sections};
				\end{tikzpicture}
\end{figure}
\begin{equation*}
				\text{CBio}_{\text{loc}}/_{\top} \left(
				(\xspace,\biop),\text{spec}(\biop^{\prime}) \right)\equiv \text{CBio}_0\left(
								\biop^{\prime},\biop(\xspace)
				\right)
\end{equation*}
\label{thm:7.2}
\end{theorem}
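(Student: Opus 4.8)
The plan is to follow Grothendieck's proof of the $\text{spec}$--$\Gamma$ adjunction essentially verbatim; everything needed is now in place, namely Theorem~\ref{thm:1} (affine global sections), the localization $S^{-1}\biop$ of (\ref{eq:4.6}) together with its universal property, and the bookkeeping of primes and ideals from Sections~5--6. Fix $(\xspace,\biop)\in\cbio_{\text{loc}}/_{\top}$ and $\biop^{\prime}\in\cbio_0$. I will build mutually inverse maps between the two Hom-sets, natural in both arguments. In one direction, from a morphism $f=(f,f^{\natural}):(\xspace,\biop)\to\text{spec}(\biop^{\prime})$ of locally-bio-spaces I take the component $f^{\natural}$ over the whole space $\text{spec}(\biop^{\prime})$, obtaining $\mathscr{O}_{\biop^{\prime}}(\text{spec}(\biop^{\prime}))\to\biop(f^{-1}\text{spec}(\biop^{\prime}))=\biop(\xspace)$, and precompose with the isomorphism $\biop^{\prime}\xrightarrow{\sim}\mathscr{O}_{\biop^{\prime}}(\text{spec}(\biop^{\prime}))$ of Theorem~\ref{thm:1} (case $s=1$); call the result $\Gamma(f)\in\cbio_0(\biop^{\prime},\biop(\xspace))$.

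Conversely, given $\alpha\in\cbio_0(\biop^{\prime},\biop(\xspace))$ I first produce the continuous map of spaces. For $x\in\xspace$ let $\alpha_x:\biop^{\prime}\to\biop(\xspace)\to\biop_x$ be $\alpha$ followed by restriction to the stalk, and set $f(x):=\alpha_x^{-1}(m_x)$; this is a prime of $\biop^{\prime}(1)$ since preimages of primes are prime (\ref{eq:5.6}). Continuity reduces to showing $f^{-1}(D(g))$ is open for $g\in\biop^{\prime}(1)$, and $f^{-1}(D(g))=\{x:\alpha(g)_x\notin m_x\}=\{x:\alpha(g)_x\text{ is invertible in }\biop_x\}$, which is open because the local inverses of the global section $\alpha(g)$ are unique and hence glue over that locus. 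For the morphism of sheaves, over a basic open $D(g)\subseteq\text{spec}(\biop^{\prime})$ Theorem~\ref{thm:1} identifies $\mathscr{O}_{\biop^{\prime}}(D(g))$ with $\biop^{\prime}_g=\biop^{\prime}[1/g]$; since $\alpha(g)$ is invertible in $\biop(f^{-1}D(g))$, the universal property of the localization (\ref{eq:4.6}) factors the restriction of $\alpha$ uniquely through $\biop^{\prime}_g\to\biop(f^{-1}D(g))$. These factorizations are compatible with the inclusions $D(g^{\prime})\subseteq D(g)$, so, the basic opens forming a basis and $f_{\ast}\biop$ being a sheaf, they assemble into $f^{\natural}:\mathscr{O}_{\biop^{\prime}}\to f_{\ast}\biop$. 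On the stalk at $x$, $f^{\natural}_x:\biop^{\prime}_{f(x)}\to\biop_x$ is the localization of $\alpha_x$ along $S_{f(x)}=\biop^{\prime}(1)\setminus f(x)$, so it sends $m_{f(x)}=S_{f(x)}^{-1}(f(x))$ into $m_x$ because $f(x)=\alpha_x^{-1}(m_x)$; thus $(f,f^{\natural})$ is a morphism of locally-bio-spaces.

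It remains to check the two assignments are inverse and natural. Starting from $\alpha$ and reconstructing $(f,f^{\natural})$, the induced map on global sections is the $g=1$ instance of the factorizations above, which is $\alpha$ itself. Starting from $(f,f^{\natural})$ and putting $\alpha:=\Gamma(f)$, the composite $\biop^{\prime}\to\biop_x$ factors through the local map $f^{\natural}_x:\biop^{\prime}_{f(x)}\to\biop_x$; hence $\alpha_x^{-1}(m_x)$ is the preimage of $m_{f(x)}$ under $\biop^{\prime}\to\biop^{\prime}_{f(x)}$, which is $f(x)$ by (\ref{eq:5.15}), so the reconstructed space map is $f$; and on each $D(g)$ the given $f^{\natural}$ and the reconstructed one are both the unique localization factorization of $\Gamma(f)$ through $\biop^{\prime}_g$, hence coincide. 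Naturality in $\biop^{\prime}$ and in $(\xspace,\biop)$ is immediate from the constructions.

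The main obstacle is purely the two ``elementary'' bio-analogues invoked above: that the locus where a global section of a sheaf of bios is stalk-wise invertible is open, and that $\biop^{\prime}\to\biop^{\prime}_g$ is initial among bio maps sending $g$ to a unit. Checking these amounts to verifying that the commutativity condition of Section~\ref{sec:4} is exactly what makes the ``linear-combination'' description of ideals (\ref{eq:5.3}) and the localization of (\ref{eq:4.6}) interact with the $\circleftarrow,\circrightarrow$ operations just as in ordinary commutative algebra; once this is secured, the components $\biop^{\pm}(n)$ for $n>1$ need no separate treatment, since $f^{\natural}$ is built as a morphism of sheaves of bios and Theorem~\ref{thm:1} already controls every arity uniformly.
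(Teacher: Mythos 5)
Your proposal is correct and is essentially the paper's own proof: the paper packages the construction as the universal morphism $\pid:(\xspace,\biop)\to\text{spec}\left(\biop(\xspace)\right)$ (with $\pid_{x}=\phi_{x}^{-1}(m_{x})$, openness of the stalk-invertibility locus, gluing of the unique local inverses to define $\pid^{\natural}_{D(s)}$ on $\biop(\xspace)_{s}$ via Theorem~\ref{thm:1}, and locality of $\pid^{\natural}_{x}$) and then obtains the general bijection by composing with $\text{spec}(\varphi)$ and taking $\Gamma$, whereas you run the identical construction directly for an arbitrary $\alpha\in\text{CBio}_0(\biop^{\prime},\biop(\xspace))$ and spell out the mutual-inverse verification that the paper leaves as ``one checks''. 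One cosmetic correction: openness of $f^{-1}\left(D(g)\right)$ follows from spreading a stalk equation $a\circ\alpha(g)|_{x}=1$ to a neighborhood (exactly as in the paper), not from the gluing of the unique inverses, which is instead what supplies the global inverse over that locus needed for the factorization through the localization.
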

\begin{proof}
				For a point $x\in \xspace$, we have canonical map $\phi_x\in \text{CBio}_0\left(
				\biop(\xspace),\biop_{x} \right)$, $\phi_{x}P=P|_{x}$ the stalk of the global
				section $P$ at the point $x$, and we get a prime
				$\pid_{x}:=\phi_{x}^{-1}(m_{x})\in \text{spec}\left(\biop(\xspace)\right)$, $m_{x}\subseteq
				\biop_{x}(1)$ the maximal ideal. For a basic open set $D(s)\subseteq
				\text{spec}\left(\biop(\xspace)\right)$, $s\in \biop(\xspace)(1)$, we have $\left\{ x\in
								X,\pid_{x}\in D(s)
				\right\}=\left\{ x\in X,\phi_{x}(s)\not\in m_{x} \right\}$ is \myemph{open} in
				$\xspace$: if $\phi_{x}(s)\not\in m_{x}$, it is invertible in $\biop_{x}$,
				$a_{x}\circ \phi_{x}(s)=1$, $a_{x}\in\biop_{x}$ and there is an open
				$\mathscr{U}_{x}\ni x$,
				$a\in \biop(\mathscr{U}_{x})$ with $a_{x}=a|_{x}$; taking $\mathscr{U}_{x}$ smaller we have $a\circ
				s|_{\mathscr{U}_{x}}=1$ already in $\biop(\mathscr{U}_x)$, and so $\phi_{x^{\prime}}(s)\not\in
				m_{x^{\prime}}$ for all $x^{\prime}\in \mathscr{U}_{x}$. Thus the map
				$x\mapsto\pid_{x}$ is a continuous map
				$\pid:\xspace\xmapsto{\quad}\text{spec}\left(\biop(\xspace)\right)$.
				The uniqueness of the inverse $\left( s|_{u_{x}} \right)^{-1}$, shows these
				local inverses glue to a global inverse $s^{-1}\in \biop\left( \pid^{-1}\left(
				D(s) \right) \right)$ and we get the map of $\text{CBio}_0$
				\begin{equation}
								\pid_{D(s)}^{\natural}: \biop(\xspace)_{s}\equiv \left\{ s^{\N}
								\right\}^{-1}\biop(\xspace)\xrightarrow{\qquad}\biop\left(\pid^{-1}\left(
																D(s)
								\right)\right)
								\label{eq:7.4}
				\end{equation}
				These maps are compatible on intersections, $D(s_{1})\cap D(s_{2})=D(s_{1}\circ
				s_{2})$, so by the sheaf property give $\pid_{\mathscr{U}}^{\natural}\in
				\text{CBio}_0\left(
								\mathscr{O}_{\text{spec}\; \biop(\xspace)}(\mathscr{U}),
								\biop(\pid^{-1}\mathscr{U})
				\right)$ for any open $\mathscr{U}\subseteq \text{spec}\left(\biop(\xspace)\right)$, 
				compatible with restrictions, so 
				\begin{equation}
								\pid = (\pid,\pid^{\natural})\in \text{CBio}/_{\top}\left(
								(\xspace,\biop),\text{spec}\left(\biop(\xspace)\right) \right)
								\label{eq:7.5}
				\end{equation}
				For $x\in\xspace$, we get 
				\begin{equation}
								\pid_{x}^{\natural}=\limrightarrow\limits_{\phi_{x}(s)\not\in m_{x}}
								\pid^{\natural}_{D(s)}\in \text{CBio}_{\text{loc}}\left(
								\biop(\xspace)_{\pid_{x}},\biop_{x} \right)
								\label{eq:7.6}
				\end{equation}
				is \myemph{local}, so $\pid\in
				\text{CBio}_{\text{loc}/\top}\left((\xspace,\biop),\text{spec}\left(\biop( \xspace )\right)\right)$
is the co-unit of adjunction. \vspace{.1cm}\\
				Given $\varphi\in \text{CBio}_0\left(
				A,\biop(\xspace) \right)$ we get $\text{spec}(\varphi)\circ\pid\in
				\text{CBio}_{\text{loc}/\top}\left( (\xspace,\biop),\text{spec} (A) \right)$.
				\vspace{.1cm}\\
				Given  $f\in \text{CBio}_{\text{loc}/\top}\left(
				(\xspace,\biop),\text{spec}(A) \right)$ we get the global sections 
				\begin{equation}
								\Gamma(f)=f_{\text{spec}(A)}^{\natural}\in \text{CBio}_0(A,\biop(\xspace)).
								\label{eq:7.7}
				\end{equation}
				One  checks these are inverse bijections.
\end{proof}
\noindent Following the footsteps of Grothendieck
we can define the category of \break
\myemph{Generalized Schemes} $\text{GSch} \subseteq
\text{CBio}_{\text{loc}}/_{\top}$ to be the full subcategory of
				$\text{CBio}_{\text{loc}}/_{\top}$ consisting of the object
				$(\xspace,\mathscr{O}_{\xspace})$ which are locally affine: we have some open
				cover $\xspace=\bigcup
				\mathscr{U}_{i}$, and $(\mathscr{U}_{i},\mathscr{O}_{\xspace}|_{\mathscr{U}_{i}})\cong
				\text{spec}\left(\mathscr{O}_{\xspace}(\mathscr{U}_{i})\right)$. \vspace{.1cm}\\
An open subset of a scheme is again a scheme. \vspace{.1cm}\\
Schemes can be glued along open subsets and consistent glueing data.
				\vspace{.1cm}\\
				Since ordinary commutative rings $A$ give commutative bios $\biop_{A}\in
				\text{CBio}_0$ and since all our definitions reduce to their classical analogues for
				$\biop=\biop_{A}$, $A\in C\text{Ring}$, ordinary schemes embeds fully faithfully in generalized
				schemes, 
				$$\sch\hookrightarrow \text{GSch},
(\xspace,\mathscr{O}_{\xspace})\mapsto (\xspace,\biop_{\mathscr{O}_{\xspace}})$$
\begin{theorem}
				The category $\text{GSch}$ has fiber products, for  $f_{i}\in\text{GSch}(X_{i},Y)$:
\begin{figure}[H]
				\centering
				\begin{tikzpicture}[baseline=0mm]
								 \node at (50mm,30mm) {$X_{0}\prod\limits_{Y}X_{1}$};
								 \node at (35mm,15mm) {$X_{0}$};
								 \node at (65mm,15mm) {$X_{1}$};
								 \node at (50mm,0mm) {$Y$};
								 \draw[->] (54mm,28mm)--(64mm,18mm);
								 \draw[->] (46mm,28mm)--(36mm,18mm);
								 \draw[-] (43mm,23mm)--(50mm,18mm)--(57mm,23mm);
								 \draw[->] (36mm,13mm)--(47mm,2.5mm);
								 \draw[->] (63mm,13mm)--(52mm,2.5mm);
								 \node at (38mm,8mm) {$f_{0}$};
								 \node at (61mm,8mm) {$f_{1}$};

				\end{tikzpicture}
\end{figure}
				\label{thm:3}
\end{theorem}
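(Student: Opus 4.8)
The plan is to follow Grothendieck's reduction of the construction of fibre products to the affine case: first dispose of the affine case using the adjunction of Theorem~\ref{thm:7.2}, and then propagate it using the two structural facts recorded just above --- an open subset of a scheme is a scheme, and schemes glue along open subschemes with consistent glueing data.

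\emph{The affine case.} Let $X_i=\text{spec}(A_i)$ and $Y=\text{spec}(B)$ with $A_i,B\in\cbio_0$. By Theorem~\ref{thm:1} (case $s=1$) the global-section functor recovers $A_i$ and $B$, so the adjunction of Theorem~\ref{thm:7.2} exhibits $\text{spec}$ as a right adjoint $(\cbio_0)^{\op}\to\cbio_{\text{loc}}/_{\top}$ and turns $f_0,f_1$ into maps $g_i\in\cbio_0(B,A_i)$. Form the push-out $C:=A_0\underset{B}{\boxtimes}A_1$ of $(g_0,g_1)$; it exists by the co-completeness of the categories in~\eqref{eq:4.8}--\eqref{eq:4.9}, and since the zeros of $A_0$ and $A_1$ acquire a common image in $C$ it lies in $\cbio_0$ and is the push-out there as well. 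A push-out in $\cbio_0$ is a pull-back in $(\cbio_0)^{\op}$, and a right adjoint preserves pull-backs, so $\text{spec}(C)$, equipped with the projections induced by the structure maps $A_i\to C$, is the pull-back of $X_0\to Y\leftarrow X_1$ in $\cbio_{\text{loc}}/_{\top}$. As $\text{spec}(C)$ and the whole diagram lie in the full subcategory $\gsch$, $\text{spec}(C)=X_0\prod_Y X_1$ in $\gsch$.

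\emph{Localisation and glueing.} For the general case I would first record the compatibility lemma: if $X\prod_Y Z$ exists in $\gsch$, with projection $p$ to $X$, and $U\subseteq X$ is open, then the open subscheme $p^{-1}(U)$ --- a scheme, by the first recorded fact --- is $U\prod_Y Z$; this is immediate from the universal property, since any map to $X\prod_Y Z$ whose $X$-component factors through $U$ factors through $p^{-1}(U)$. Symmetrically in the $Z$-variable, and likewise $X_0\prod_Y X_1=X_0\prod_{Y'}X_1$ whenever $Y'\subseteq Y$ is open and $f_0,f_1$ both factor through $Y'$. The fibre product is then assembled in three glueing steps of identical shape: over an open cover of one of $X_0,X_1,Y$ by pieces for which the relevant fibre product is already known, the partial fibre products agree on pairwise overlaps (by the lemma) and satisfy the cocycle condition on triple overlaps (the gluing isomorphisms are forced by the three projections together with uniqueness in the universal property), hence glue to a scheme; and one checks this scheme is $X_0\prod_Y X_1$ by testing the universal property locally on the source and re-glueing the resulting map, again by uniqueness. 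Concretely: (i) with $Y$ and $X_1$ affine, cover $X_0$ by affine opens $U_a$ and glue the affine fibre products $U_a\prod_Y X_1$ of the first step; (ii) with $Y$ affine, cover $X_1$ by affine opens $V_b$ and glue the $X_0\prod_Y V_b$ supplied by (i); (iii) in general cover $Y$ by affine opens $W_c$, put $X_i^{c}:=f_i^{-1}(W_c)$, observe that $X_0^{c}\prod_{W_c}X_1^{c}=X_0^{c}\prod_Y X_1^{c}$ exists by (ii), and glue these over $c$.

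\emph{Main obstacle.} The only genuinely non-formal ingredient is the affine case, which Theorem~\ref{thm:7.2} will render immediate; the hard part will be the bookkeeping of the three glueing steps --- above all a careful proof of the localisation lemma and the verification of the cocycle identities --- though this is the classical argument and the two structural facts it needs have already been granted. The point I would watch most carefully, being specific to the non-additive setting, is that the push-out must be taken in $\cbio_0$ rather than merely in $\cbio$, and that $\text{spec}$ of this push-out really is the categorical fibre product in the whole of $\cbio_{\text{loc}}/_{\top}$ and not just among affine schemes; this is exactly what Theorem~\ref{thm:7.2} secures, since it presents $\text{spec}$ as a right adjoint out of $(\cbio_0)^{\op}$, which therefore sends the push-out in $\cbio_0$ to a pull-back.
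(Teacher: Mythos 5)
Your proposal is correct and takes essentially the same route as the paper: the paper likewise handles the affine case by $X_0\prod_Y X_1=\text{spec}\left(A_0\underset{B}{\boxtimes}A_1\right)$, the push-out taken in $\cbio_0$ (i.e. $\left(A_0\underset{B}{\amalg}A_1\right)^C$), and then, "exactly as for ordinary schemes," covers $Y$ by affines $\text{spec}(B_i)$ and each $X_\varepsilon$ by affines mapping into them and glues the resulting affine fibre products --- your three-step glueing is just the expanded form of this. The one small divergence is that you form the push-out in $\cbio$ and then argue it lands in $\cbio_0$; the paper avoids this by taking the push-out directly in $\cbio_0$, which is cleaner and still feeds into your adjunction argument via Theorem~\ref{thm:7.2} verbatim.
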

\begin{proof}
Exactly as for ordinary schemes. Write $Y=\bigcup\limits_{i}\text{spec}(B_{i})$, 
$X_{\varepsilon}=\bigcup\limits_{i,j} \text{spec}(A_{j,i}^{\varepsilon})$, $\varepsilon=0,1$
with $f_{\varepsilon}\left(\text{spec}(A_{j,i}^{\varepsilon})\right)\subseteq
\text{spec}\; (B_{i})$,  and glue:
\begin{equation}
				X_0\prod\limits_{Y} X_{1}\equiv
				\coprod_{i,j_{0},j_{1}}\text{spec}\left(
				A^{0}_{j_{0},i}\underset{B_i}{\boxtimes}A_{j_{1},i}^{1} \right).
								\label{eq:7.8}
\end{equation}
Note that one uses the push-out
$\left(A^{0}_{j_0,i}\coprod\limits_{B_{i}}A_{j_{1},i}^{1}\right)^{C}$ in $\text{CBio}_0$
\end{proof}

\section{Pro-Schemes} \label{sec:8}
We have the full-embeddings 
\begin{equation}
				\text{Aff}\equiv \left( \cbio_{0} \right)^{\op} \subseteq \text{GSch} \subseteq
				\cbio_{\text{loc}}/\top
				\label{eq:8.1}
\end{equation}
The category $\cbio_{\text{loc}}/\top=\mathcal{C}$ is complete. Given 
$\left\{ \xspace_i \right\}\in\mathcal{C}^{I}$ we have the topological space
$\lim\limits_{\underset{I}{\leftarrow}}\xspace_{i}$, its points $x=(x_{i})$ are 
consistent sequences of points $x_{i}\in \xspace_{i}$, and we have the structure
sheaf $\mathscr{O}=\lim\limits_{\underset{I}{\rightarrow}}\mathscr{O}_{\xspace_{i}}$ over it
(with local stalks: $\mathscr{O}$ being the co-limit of
local homomorphisms). The category $\text{Aff}$ is also complete, as $\cbio_0$ is
co-complete
\begin{equation}
				\lim_{\underset{I}{\leftarrow}}\text{spec}(A_i)\equiv\text{spec}(\lim\limits_{\underset{I}{\rightarrow}}A_{i}) 
				\label{eq:8.2}
\end{equation}
But the category $\text{GSch}$, the full sub-category of
$\cbio_{\text{loc}}/\top$ with objects the ones locally isomorphic to elements of
$\text{Aff}$, is not complete: Given $\left\{ \xspace_{i} \right\}\in \left(
				\text{GSch}
\right)^{I}$, and given a point 
\begin{equation}
				x=(x_{i}) \in \lim_{\underset{I}{\leftarrow}}\xspace_i\in \cbio_{\text{loc}}/\top
				\label{eq:8.3}
\end{equation}
while for each $i\in I$ we have an affine neighborhood $x_{i}\in \text{spec}
A_{i}\subseteq\xspace$ there need not be an affine neighborhood of $x$ in
$\lim\limits_{\underset{I}{\leftarrow}}\xspace_{i}$. Such is the case of the real prime. 
So we pass to the pro-category pro-GSchm. It has objects $(J,\xspace_{j})$ where $J$ is a
partially ordered set which is 
\begin{equation}
				\text{directed:}\quad \forall j_1,j_2\in J,\;\; \exists j\in J ,\;\; j\ge j_1 \;\;
				\text{and}\;\; j\ge j_{2}
				\label{eq:8.4}
\end{equation}
\begin{equation}
				\begin{array}[H]{l}
				\text{co-finite:}\quad \forall j\in J,\;\; \#\left\{ i\in J, i\le j
				\right\}<\infty \\\\
\xspace_{j}\in (\text{GSch})^{J}\;:\quad j_{1}\ge j_{0}\;\;\Longrightarrow \;\;
				\xspace_{j_1}\longrightarrow \xspace_{j_0}
				\end{array}
				\label{eq:8.5}
\end{equation}
The maps in pro-$\text{GSch}$ are given by
\begin{equation}
				\text{pro-GSch}\left( (J,\xspace_{j}), (I,Y_{i}) \right)=
				\lim\limits_{\underset{I}{\leftarrow}}\lim\limits_{\underset{J}{\rightarrow}}
				\text{GSch}\left( \xspace_{j},Y_{i} \right)
				\label{eq:8.6}
\end{equation}
We have a functor 
\begin{equation}
				\begin{array}[H]{llll}
								\lim\limits_{\longleftarrow}\;\; : \;\; & \text{pro-GSch}&\longrightarrow&
								\cbio_{\text{loc}}/\top \\\\
								\ & (J,\xspace_{j})&\xmapsto{\quad}&
								\lim\limits_{\overset{\longleftarrow}{J}}\xspace_{j}
				\end{array}
				\label{eq:8.7}
\end{equation}
The category $\text{GSch}$ embeds fully faithfully in pro-$\text{GSch}$; the category
pro-\text{GSch} is closed under limits over directed co-finite posets. 
\begin{example}
				Take 
				\begin{equation}
								J=\left\{ n\in\N \;\text{square free} \right\}=\left\{ p_{1}\cdot
								p_{2}\cdots p_{\ell},p_{i}\;\text{prime},i\not= j\Rightarrow p_{i}\not=
				p_{j} \right\}
				\label{eq:8.9}
\end{equation}
with $n_{1}\ge n_{0}$ iff $n_{0}$ divides $n_1$. We have the ``\myemph{compactified
$\text{spec}(\Z)$}'' 
\begin{equation}
				\overline{\text{spec}(\Z)}:=\left\{
								\xspace_{n}=\text{spec}(\Z)\coprod\limits_{\text{spec}(\Z\left[ \frac{1}{n}
\right])}\text{spec}\left( \Z\left[ \frac{1}{n} \right]\cap\Z_{\R} \right)
\right\}_{n\in J}
\label{eq:8.10}
\end{equation}
Pictorially, the specialization picture of the underlying topological space $\xspace_n$
looks like 
\begin{equation}
				\begin{aligned}
								\begin{tikzpicture}[baseline=0mm,scale=1.5]
				\node at (10mm,23mm) {$\xspace_n, n=p_1\cdot p_2\cdots p_{\ell}$:};
				\coordinate (A) at (40mm,5mm);
				\coordinate (A1) at (43mm,6mm);
				\draw[fill=black] (A1) circle [radius=0.1mm];
				\coordinate (A2) at (42.7mm,6.8mm);
				\draw[fill=black] (A2) circle [radius=0.1mm];
				\coordinate (A3) at (42mm,7.5mm);
				\draw[fill=black] (A3) circle [radius=0.1mm];
				\coordinate (A4) at (38mm,7mm);
				\draw[fill=black] (A4) circle [radius=0.1mm];
				\coordinate (A5) at (37.5mm,6mm);
				\draw[fill=black] (A5) circle [radius=0.1mm];
				\node[yshift=0mm] at (A) {$(0)$} ;
				\coordinate (B5) at (25mm,15mm);
				\draw (B5)--(A5);
				\coordinate (B4) at (35mm,15mm);
				\draw (B4)--(A4);
				\coordinate (B3) at (45mm,15mm);
				\draw (B3)--(A3);
				\coordinate (B2) at (55mm,15mm);
				\draw (B2)--(A2);
				\coordinate (B1) at (65mm,15mm);
				\draw (B1)--(A1);
				\node at (30 mm, 16mm) {$\cdots$};
				\node at (50 mm, 16mm) {$\cdots$};
				\node at (60 mm, 16mm) {$\cdots$};
				\node at (59.5mm, 18mm) {$ p\not \divides n$};
				\coordinate (BB5) at  ($(B5)+(-1mm,1mm)$);
				\draw[fill=blue] (BB5) circle [radius=0.3mm];
				\coordinate (BB4) at  ($(B4)+(-0.5mm,1mm)$);
				\draw[fill=blue] (BB4) circle [radius=0.3mm];
				\coordinate (BB3) at  ($(B3)+(0.5mm,1mm)$);
				\draw[fill=blue] (BB3) circle [radius=0.3mm];
				\coordinate (BB2) at  ($(B2)+(0.6mm,1mm)$);
				\draw[fill=blue] (BB2) circle [radius=0.3mm];
				\coordinate (BB1) at  ($(B1)+(0.7mm,1mm)$);
				\draw[fill=blue] (BB1) circle [radius=0.3mm];
				\coordinate (C) at  ($(B2)+(2mm,10mm)$);
				\draw[fill=blue] (C) circle [radius=0.3mm];
				\coordinate (BBB3) at  ($(BB3)+(0.8mm,1mm)$);
				\draw (BBB3)--(C);
				\coordinate (BBB2) at  ($(BB2)+(0.2mm,1mm)$);
				\draw (BBB2)--(C);
				\coordinate (BBB1) at  ($(BB1)+(-0.7mm,1mm)$);
				\draw (BBB1)--(C);
				\node[yshift=3mm] at (BB5) {$p_1$};
				\node[yshift=3mm] at (BB4) {$p_\ell$};
				\node[yshift=1mm,xshift=3mm] at (C) {$\eta_n$};
\end{tikzpicture}
				\label{eq:8.11}
				\end{aligned}
\end{equation}
				\label{example:8.8}
\end{example}
with $\eta_n$ denoting the maximal ideal of $\Z\left[ \frac{1}{n} \right]\cap
\Z_{\R}$.  \\ 
The specialization  picuture of the space
$\lim\limits_{\underset{J}{\longleftarrow}}\xspace_{n}$ is 
\begin{equation}
				\begin{aligned}
\begin{tikzpicture}[baseline=0mm]
				\node at (33mm,15.5mm) {$\cdots$};
				\node at (45mm,15.5mm) {$\cdots$};
				\node at (10mm,17mm) {$\eta$};
				\node at (17.5mm,17mm) {$2$};
				\node at (25mm,17mm) {$3$};
				\node at (30mm,2mm) {$(0)$};
				\node at (40mm,17mm) {$p$};
				\draw (30mm,5mm)--(10mm,15mm);
				\draw (30mm,5mm)--(17.5mm,15mm);
				\draw (30mm,5mm)--(25mm,15mm);
				\draw (30mm,5mm)--(40mm,15mm);
\end{tikzpicture}
				\end{aligned}
				\label{eq:8.12}
\end{equation}
When $n|m$, the map $\xspace_{m}\rightarrow \xspace_n$ is the identity on points, and
for $\mathscr{U}\subseteq \xspace_n$ open:
$\bigo_{\xspace_{n}}(\mathscr{U})\equiv\bigo_{\xspace_{m}}(\mathscr{U})$; but there
are more open neighborhood to $\eta_{m}\in \xspace_{m}$ than there are for $\eta_n\in
\xspace_{n}$. 
\section{Valuations} \label{sec:9}
\begin{definition}
				We say $K\in\cbio_0$ is a \myemph{field} if 
				\begin{equation*}
								K(1)\setminus \left\{ 0 \right\} \equiv K^{*}:=\left\{ a\in K(1), \exists
								a^{-1}\in K(1), a\circ a^{-1}=1\right\}.
				\end{equation*}
				A $\text{sub-Bio}_{0}$ $B\subseteq K$ is than a \myemph{domain}: $\left\{ 0
				\right\}$ is prime and the localization gives embedding 
				\begin{equation}
								B\subseteq B_{(0)} \subseteq K
								\label{eq:9.2}
				\end{equation}
				We define
				\begin{equation}
								\begin{array}[H]{ll}
												B^{+}(n)^{\perp} = \left\{p\in K^{-}(n), p\circ B^{+}(n)\subseteq
												B(1)\right\} \\\\
												B^{-}(n)^{\perp} = \left\{q\in K^{+}(n), B^{-}(n)\circ q\subseteq
												B(1)\right\} 
								\end{array}
								\label{eq:9.3}
				\end{equation}
        For fields $k\subseteq K \in \cbio_{0}^{t} $ (with involution!) we define the valuation-sub-bios
				\begin{equation}
								\text{Val}(K/k):=
								\left\{
								\begin{array}[H]{l}
												B\in\cbio_{0}^{t}, \; k\subseteq B\subseteq B_{(0)}\equiv K, \\\\
												B^{\pm}(n)^{\perp}=B^{\mp}(n) \\\\
												\forall x \in K^{\ast}, \; x\in B(1)\;\; \text{or}\;\; x^{-1}\in
												B(1)
								\end{array}\right\}
								\label{eq:9.4}
				\end{equation}
We define the real-valued valuations
\begin{equation}
								\text{Val}_{\R}\left( K/k \right):= \left\{
												\begin{array}[H]{l}
																|\;| = |\;|_{n,\pm}: K^{\pm}(n)\longrightarrow [0,\infty)
																\\\\
																|x_1\circ x_2|_1 = |x_1|_1\cdot |x_2|_1, \quad  |x|_1=0
																\Leftrightarrow x=0. \\\\
																\big|p\circ(p_i)\big|\le |p|\cdot\max |p_i|;\quad |(q_i)\circ
																q|\le |q|\cdot \max|q_i|; \\\\
																|p\circ (q_i)|\le |p|\cdot \max|q_i|;\quad |(p_i)\circ q|\le
																|q|\cdot \max |p_i|; \\\\
																|p|=|p^{t}|;\quad \forall\lambda\in k, |\lambda|\le 1; \\\\
																|p|_{n,-}= \sup\left\{ |p\circ q|_{1}, |q|_{n+}\le 1 \right\} =
																\inf \left\{ |d^{-1}|_{1}, |d\circ p|_{n-}\le 1 \right\} \\\\
																|q|_{n,+}= \sup\left\{ |p\circ q|_{1}, |p|_{n-}\le 1 \right\} =
																\inf \left\{ |d^{-1}|_{1}, |q\circ d|_{n+}\le 1 \right\} 
												\end{array}
								\right\}_{\text{\Huge $/\approx$}}
								\label{eq:9.5}
				\end{equation}
				where $\approx$ is the equivalence relation given by 
				\begin{equation*}
								||\approx||^\sigma \quad , \quad \sigma>0.
				\end{equation*}
				\label{def:9.1}
\end{definition}
We have a map  
\begin{equation}
				\begin{array}[H]{rl}
								\text{Val}_{\R}\left( K/k \right)&\hookedrightarrow{1} \text{Val}\left( K/k \right) \\\\
				|\;| &\xmapsto{\qquad} B_{||}\equiv \left\{ p\in K, |p|\le 1 \right\}
				\end{array}
				\label{eq:9.6}
\end{equation}
and this map is a bijection if for all $B\in\text{Val}\left( K/k \right)$ the set
$K^{\ast}/B^{\ast}$, which is partially-ordered by 
\begin{equation}
				x\le y \Longleftrightarrow x\circ y^{-1}\in B(1)
				\label{eq:9.7}
\end{equation}
is class one and there is an order preserving embedding $|\;|_1:K^{\ast}/B^{\ast}\hookrightarrow
(0,\infty)$. 
\begin{theorem}[Ostrowski Theorem]
				We have 
				\begin{equation}
								\text{Val}\left( \Q/\F \right) = \text{Val}_{\R}\left( \Q/\F \right)=
								\left\{ \Q;\Z_{(p)}=\Q\cap \Z_p,p\;\text{prime}; \Q\cap \Z_{\R} \right\}
								\label{eq:9.8}
				\end{equation}
				For a number field $K$ we have 
				\begin{equation}
								\begin{array}[H]{ll}
												\text{Val}\left( K/\F \right)&=\text{Val}_{\R}\left( K/\F \right) \\\\
								&= 
								
								\left\{
												\begin{array}[H]{l}
												K;\bigo_{K,\pid}=K\cap\hat{\bigo}_{K,\pid}, \pid \subseteq
												\bigo_{K}\;\text{prime}; \vspace{.1cm}\\
				K\cap\sigma^{-1}(\Z_{\C}),\sigma:K\hookrightarrow\C,\sigma\sim\overline{\sigma} 
												\end{array}\right\}
								\end{array}
				\label{eq:9.9}
				\end{equation}
				\label{thm:ostrowski}
\end{theorem}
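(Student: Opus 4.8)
The plan is to reduce the classification of valuation sub-bios to the classical Ostrowski theorem; the new input is that the two duality clauses of (\ref{eq:9.4}) rigidify everything — they force the higher-arity pieces of $B$ to be determined by the monoid $B(1)$, and they cut the admissible ``value monoids'' down to the classical ones. I would carry out the argument for $\Q/\F$ and then indicate the (routine) modifications for a number field $K$. First, for $B\in\mathrm{Val}(\Q/\F)$ put $V=B^-(1)=B^+(1)\subseteq\Q$. The last clause of (\ref{eq:9.4}) makes $V$ a multiplicative valuation monoid of $\Q$: a submonoid containing $0$ and (necessarily, since $-1=(-1)^{-1}$) $\pm1$, with $x\in V$ or $x^{-1}\in V$ for all $x\in\Q^{\ast}$. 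Since the involution of $\biop_{\Q}$ is the formal transpose, $B\in\cbio_0^t$ gives $B^-(n)=B^+(n)$ as subsets of $\Q^n$, so the perp clause $B^{\pm}(n)^{\perp}=B^{\mp}(n)$ collapses to the single reflexivity requirement
\[
  B^-(n)=B^-(n)^{\perp},\qquad\text{where}\quad S^{\perp}:=\bigl\{\,q\in\Q^n:\textstyle\sum_i q_ip_i\in V\ \text{for all }p\in S\,\bigr\}.
\]
As $\F^-(n)=\{0,\delta_1,\dots,\delta_n\}\subseteq B^-(n)$, applying $\perp$ twice yields $(V^n)^{\perp}\subseteq B^-(n)\subseteq V^n$, and $B^-(\bullet)$ is closed under the operad compositions and the mutual actions. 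Hence $B$ is pinned down by $V$ once one knows that a $\perp$-self-dual, operadically closed family between $(V^{\bullet})^{\perp}$ and $V^{\bullet}$ exists and is unique.

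The next and essential step is to determine which $V$ can occur. The engine is \emph{pairing a vector with its own transpose}: for $p=(x_1,\dots,x_n)\in B^-(n)$ one has $p^{\,t}\in B^+(n)=B^-(n)$, hence $x_1^2+\cdots+x_n^2=\langle p,p^{\,t}\rangle\in V$; in particular this holds for every $q\in(V^n)^{\perp}$. I would then argue: (i) if $V=\Q$ then $B=\biop_{\Q}$; (ii) if $V\neq\Q$ is closed under addition it is a valuation ring of $\Q$ containing $\Z$, hence $V=\Z_{(p)}$, and $(\Z_{(p)}^n)^{\perp}=\Z_{(p)}^n$ forces $B^-(n)=\Z_{(p)}^n$, i.e.\ $B=\biop_{\Z_{(p)}}=\biop_{\Q}\cap\biop_{\Z_{p}}$; (iii) the archimedean possibility is $V=\Z_{\R}\cap\Q=\{x\in\Q:|x|\le1\}$, where $\sum x_i^2\le1$ gives $B^-(n)\subseteq\biop_{\ell_2}(n)\cap\Q^n$, Cauchy--Schwarz plus self-duality gives the reverse inclusion, and so $B=\Z_{\R}$ on rational points; (iv) no other $V$ can occur, because for a $V$ not closed under addition the strict dual pair $(V^{\bullet})^{\perp}\subsetneq V^{\bullet}$ admits no $\perp$-self-dual operadically closed member strictly in between. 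The involution is also what pins the archimedean sub-bio to the $\ell_2$-ball rather than an $\ell_p$-ball with $p\neq2$: only $p=q=2$ is involution-stable, cf.\ \S\ref{sec:3}.

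Finally I would identify $\mathrm{Val}$ with $\mathrm{Val}_{\R}$ and pass to number fields. Each of $\biop_{\Q}$, $\biop_{\Z_{(p)}}$, $\Z_{\R}$ is manifestly defined by a real-valued absolute value on $\Q$ (trivial, $p$-adic, Euclidean) whose value group is archimedean of rank $\le1$; by the criterion following (\ref{eq:9.6}) the map there is a bijection, giving $\mathrm{Val}(\Q/\F)=\mathrm{Val}_{\R}(\Q/\F)$ and the list (\ref{eq:9.8}). For a number field $K$ the same two steps apply with $\Q$ replaced by $K$: the archimedean valuation sub-bios are the $K\cap\sigma^{-1}(\Z_{\C})$ for the embeddings $\sigma:K\hookrightarrow\C$, where $\sigma$ and $\bar\sigma$ give the same sub-bio since the ball $\Z_{\C}\subseteq\biop_{\C}$ is stable under complex conjugation; the non-archimedean ones are the additively closed $V$, i.e.\ the valuation rings $\bigo_{K,\pid}=K\cap\hat\bigo_{K,\pid}$ for $\pid\subseteq\bigo_K$ prime; and again every such $B$ is real-valued, which yields (\ref{eq:9.9}).

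The step I expect to be the main obstacle is item (iv) above: excluding the ``exotic'' multiplicative valuation monoids — the higher-rank lexicographic orderings on $\Q^{\ast}/\{\pm1\}\cong\bigoplus_p\Z$, the reversed rank-one monoids $\{x:v_p(x)\le0\}$, and their analogues over $K$ — by showing that the reflexivity identity $B^-(2)=B^-(2)^{\perp}$, together with closure under the operad structure, is incompatible with $V$ failing to be closed under addition. The rest is either formal bookkeeping (the reduction to $V$), the clean $\ell_2$-pairing argument (the archimedean case), or a transcription of the classical Ostrowski and valuation-theoretic facts (the passage to $\mathrm{Val}_{\R}$ and to number fields).
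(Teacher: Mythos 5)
You should first note the ground truth here: the paper does not prove this theorem at all --- its ``proof'' is the single line ``For a proof see \cite{H17}'', where the non-additive Ostrowski argument is carried out in full. So your proposal is being measured against that reference, not against anything in the present text. Your reduction skeleton is sound and is indeed how one begins: since $\F\subseteq B$ one gets $B^{\pm}(n)\subseteq V^{n}$ for $V=B(1)$ by pairing with the $\delta_i$'s, the perp clause of (\ref{eq:9.4}) then gives the sandwich $(V^{n})^{\perp}\subseteq B^{-}(n)\subseteq V^{n}$, the last clause makes $V$ a ``valuation monoid'', and the additively closed case reduces to classical valuation theory of $\Q$ (resp.\ $K$), giving $\Z_{(p)}$ (resp.\ $\bigo_{K,\pid}$) with $(V^{n})^{\perp}=V^{n}$, hence $B=\biop_{V}$.

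The problem is that what you label (iv), together with the archimedean forcing in (iii), \emph{is} the theorem, and you do not prove it --- you only announce it as the expected obstacle. At arity one the axioms admit many monoids besides the listed ones: the inverted $p$-adic monoids $\{x:\,v_p(x)\le 0\}$, monoids cut out by products of powers of several places such as $\{x\in\Q:\,|x|_\infty^{a}\cdot|x|_p^{b}\le 1\}\cup\{0\}$ (these are multiplicatively closed and satisfy the dichotomy $x\in V$ or $x^{-1}\in V$), and higher-rank lexicographic ones. Classical Ostrowski/Artin--Whaples arguments do not apply, because $V$ is only a multiplicative monoid, not a subring; the exclusion has to be extracted from the higher-arity self-duality $B^{\pm}(n)^{\perp}=B^{\mp}(n)$ together with operadic closure, and you give no such argument. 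Similarly, in the archimedean case the statement that self-duality plus closure under the compositions, actions and $S_n$-action forces exactly the rational $\ell_2$-ball (rather than some other self-dual family wedged between the $\ell_1$- and $\ell_\infty$-balls) is waved at via ``Cauchy--Schwarz plus self-duality'', but the uniqueness/forcing here is precisely the non-classical content of Haran's theorem (the involution only rules out the $\ell_p$, $p\neq 2$, candidates of \S\ref{sec:3}, not everything else). A further, smaller, circularity: invoking the criterion following (\ref{eq:9.6}) to conclude $\text{Val}=\text{Val}_{\R}$ requires already knowing that every $B\in\text{Val}$ has rank-one value monoid, i.e.\ it presupposes the classification. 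As it stands, then, this is a correct reduction plus a statement of where the real work lies, not a proof.
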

\begin{proof}
  For a proof see \cite{H17}
\end{proof}
\begin{remark}
				The requirement that $B$ is preserved by the involution of $K$, $|p^{t}|=|p|$, is
				crutial: without it we have also the $\ell_p$-valuations $\Q\cap\left(
								\biop_{\ell_{p}},\biop_{\ell_{q}}
				\right)$, cf. \S\ref{sec:3}
				\label{remark:9.10}
\end{remark}
\section{Props} \label{sec:10}
\begin{definition}
				A \myemph{prop} $\biop$ is a strict-symmetric-monoidal category which is generated
				by one object $x$. 
				\label{def:10.1}
\end{definition}
\noindent Thus we have sets $\biop_{n,m}=\biop\left( x^{\oplus m},x^{\oplus n} \right)$ with an
action of $S_n\times S_m^{\op}$ and composition
\begin{equation}
				\begin{array}[H]{rl}
				\biop_{n,m}\times \biop_{m,\ell}&\xrightarrow{\qquad}  \biop_{n,\ell} \\\\
				f,g &\xmapsto{\qquad} f\circ g
				\end{array}
				\label{eq:10.2}
\end{equation}
which is $S_{m}$-invariant, $S_{n}\times S_{\ell}^{\op}$-covariant, and associative
\begin{equation}
				(f\circ g )\circ h = f\circ (g\circ h)
				\label{eq:10.3}
\end{equation}
and unital 
\begin{equation}
				1_{n}\circ f = f = f\circ 1_{m} \; , \; f\in \biop_{n,m}\; , \;
				1_{n}\in\biop_{n,n}
				\label{eq:10.4}
\end{equation}
We also assume $\biop_{0,m}=\left\{ 0 \right\}$, $\biop_{n,0}=\left\{ 0 \right\}$, and
$0$ is an initial and final object of $\biop$. The monoidal structure is given by 
\begin{equation}
				\begin{array}[H]{rl}
								\biop_{n_1,m_1}\times \biop_{n_2,m_2} &\xrightarrow{\qquad}
								\biop_{n_1+n_2,m_1+m_2} \\\\
								f_1,f_2\xmapsto{\qquad}f_1\oplus f_2
				\end{array}
				\label{eq:10.5}
\end{equation}
It is $S_{n_1}\times S_{n_2}\subseteq S_{n_1+n_2}$ and $S_{m_1}\times S_{m_2}\subseteq
S_{m_1+m_2}$ covariant, functorial 
\begin{equation}
				\begin{array}[H]{c}
				(f_{1}\oplus f_2)\circ (g_1\oplus g_2) = (f_1\circ g_1)\oplus (f_2\circ g_2) \\\\
				1_{n+m}= 1_n\oplus 1_m
				\end{array}
				\label{eq:10.6}
\end{equation}
strict monoidal 
\begin{equation}
				\begin{array}[H]{c}
				(f_1\oplus f_2)\oplus f_3 = f_1\oplus (f_2\oplus f_3) \\
				f\oplus \text{id}_0=f = \text{id}_0\oplus f
				\end{array}
				\label{eq:10.7}
\end{equation}
and symmetric 
\begin{equation}
				f_2\oplus f_1 = \tau_{n_1,n_2}\circ (f_1\oplus f_2)\circ \tau_{m_2,m_1}
				\label{eq:10.8}
\end{equation}
with $\tau_{n,m}\in S_{n+m}$ given by
\begin{equation}
				\tau(i)=\left\{ 
								\begin{array}[H]{lc}
												n+i & i\le m \\\\
												i-m & i>m
								\end{array}
				\right.
				\label{eq:10.9}
\end{equation}
pictorially, 
\begin{equation}
				\begin{aligned}
\begin{tikzpicture}[baseline=0mm]
				\node at (5mm,5mm) {$n+m$};
				\node at (5mm,20mm) {$n+1$};
				\node at (5mm,30mm) {$n$};
				\node at (5mm,45mm) {$1$};
				\node at (5mm,38mm) {$\vdots$};
				\node at (5mm,14mm) {$\vdots$};

				\node at (50mm,5mm) {$m+n$};
				\node at (50mm,20mm) {$m+1$};
				\node at (50mm,30mm) {$m$};
				\node at (50mm,45mm) {$1$};
				\node at (50mm,38mm) {$\vdots$};
				\node at (50mm,14mm) {$\vdots$};

				\draw (0mm,43mm).. controls (35mm,45mm) and (35mm,12mm) .. (55mm,18mm);
				\draw (0mm,28mm).. controls (35mm,30mm) and (35mm,-3mm) .. (55mm,3mm);
				\draw (0mm,3mm).. controls (20mm,-3mm) and (35mm,33mm) .. (55mm,28mm);
				\draw (0mm,18mm).. controls (20mm,12mm) and (35mm,48mm) .. (55mm,42.5mm);
\end{tikzpicture}
				\end{aligned}
				\label{eq:10.10}
\end{equation}
For props $\biop$ and $\mathscr{Q}$ we put 
\begin{equation}
        \prop(\biop,\mathscr{Q})= 
	\left\{ 
		\begin{array}[H]{l}
                	f_{n,m}\in \text{Set}(\biop_{n,m},\mathscr{Q}_{n,m}),  f(\sigma\cdot p
			\cdot \sigma^{\prime})= \sigma\cdot f(p)\cdot \sigma, f(0)=0, \\\\
			f(p_{1}\circ p_{2}) = f(p_{1})\circ f(p_{2}), f(1)=1, f(p_1\oplus
			p_2) = f(p_1)\oplus f(p_2)
		\end{array}
	\right\}
	\label{eq:10.11}
\end{equation}
so that we have a category $\prop$. 
\begin{remark}
(cf. \ref{example:1.14}-\ref{remark:1.16}). Given a strict symmetric monoidal category
$\E$ and object $x\in\E^{0}$ we have the prop $\text{End}_{\E}(x)$ with 
\begin{equation}
				\text{End}_{\E}(x)_{n,m}:=\E(x^{\oplus m}, x^{\oplus n}). 
				\label{eq:10.13}
\end{equation}
Replacing everywhere Set by $\E$ we obtain the category of props in
$\E:\prop(\E)$. \\
The category $\prop$ has an involution $\biop\mapsto \biop^{\text{op}}$ with
\begin{equation}
				\biop^{\text{op}}_{n,m}:=\biop_{m,n}
  \label{eq:14}
\end{equation}
We have therefore the props with an ivnolution $p\mapsto p^{t}$,
$\biop_{n,m}\xrightarrow{\sim}\biop_{m,n}$ and the category $\prop^{t}$ (with maps
preserving the involutions). 
\label{remark:10.12}
\end{remark}
\begin{definition}
				A prop $\biop$ will be called \myemph{central} if the monoid $\biop_{1,1}$ is
				commutative and \myemph{central}:
				\begin{equation}
								a\cdot p := ( \overset{n}{\oplus}\, a)\circ p \equiv p\circ
								( \overset{m}{\oplus}\,a )\quad \text{for $a\in\biop_{1,1}$,
								$p\in\biop_{n,m}$}.
								\label{eq:10.16}
				\end{equation}
				A prop $\biop$ will be called \myemph{commutative} if it is central and we have
				for $b\in\biop_{1,k}$, $p\in\biop_{n,m} $, $d\in\biop_{k,1}$ 
				\begin{equation}
								(b\circ d)\cdot p \equiv ( \overset{n}{\oplus}\,b )\circ
								\sigma_{n,k}\cdot ( \overset{k}{\oplus}\,p )\circ
								\sigma_{k,m}\circ ( \overset{m}{\oplus}\, d )
								\label{eq:10.17}
				\end{equation}
				A prop $\biop$ will be called \myemph{totally-commuative} if we have for
				$p\in\biop_{p_0,p_1}$, \break $q\in\biop_{q_0,q_1}$, 
				\begin{equation}
								p\otimes q := (\overset{q_0}{\oplus}\, p)\circ \sigma_{q_0,p_1}\circ
                (\overset{p_1}{\oplus}\,q) \equiv \sigma_{q_0, p_0}(\overset{p_0}{\oplus}\,
								q)\sigma_{p_0,q_1}(\overset{q_1}{\oplus}\, p)\sigma_{q_1,p_1}
								\label{eq:10.18}
				\end{equation}
				(here $\sigma_{n,m}\in S_{n\cdot m}$ is the permutation (\ref{eq:4.3})). 
				\label{definition:10.15}
\end{definition}
We let 
\begin{equation}
				\begin{aligned}
\begin{tikzpicture}[baseline=0mm]
				\node at (5mm,5mm) {$\ctprop   $};
				\node at (15mm,5mm) {$\subseteq$};
				\node at (24mm,5mm) {$\cprop$};
				\node at (32mm,5mm) {$\subseteq  $};
				\node at (40mm,5mm) {$\prop  $};
        \node at (38mm,11mm) {$\mathscr{P}$};
				\draw [{Bar[width=3pt]}-{Stealth[sep,length=0.8mm,width=1.2mm]}] (36mm,11mm) -- (31mm,11mm);
				\node at (29mm,11.5mm) {$\biop^C$};
				\draw[arrows={->[slant=-.6]>[slant=-.4]}] (36mm,6.9mm).. controls (32mm,8mm) .. (28mm,6.9mm);
				\draw[arrows={->[slant=-.6]>[slant=-.4]}] (19mm,6.9mm).. controls (15mm,8mm) .. (11mm,6.9mm);
				\node at (18mm,11mm) {$\biop$};
				\draw [{Bar[width=3pt]}-{Stealth[sep,length=0.8mm,width=1.2mm]}] (15.5mm,11mm) --
				(11mm,11mm);
				\node at (9mm,11.5mm) {$\biop^T$};
\end{tikzpicture}
\label{eq:10.19}
				\end{aligned}
\end{equation}
denote the full subcategory of (resp. totally-) commutative props, and we let
$\biop\xtworightarrow{\quad}\biop^{C}$ (resp. $\biop\xtworightarrow{\quad}\biop^{T}$)
denote the maximal (totally-) commutative quotient of $\biop$ giving the left adjoints of
the inclusions (\ref{eq:10.19}). 
\begin{equation}
\begin{aligned}
\begin{tikzpicture}[baseline=0mm]
				\node at (15mm,20mm) {$\ctprop$};
				\node at (41mm,30mm) {$\cprop$};
				\draw[<<-] (22mm,23mm)--(35mm,30.5mm); 
				\node at (65mm,45mm) {$\prop$};
				\draw[{Hooks[right,length=5,width=6]}->] (23mm,20mm)--(35mm,27.5mm); 
				\draw[{Hooks[right,length=5,width=6]}->]  (46mm,32mm)--(60mm,42.5mm);
				\draw[<<-] (44mm,35mm)--(60mm,45.5mm); 
				\node at (15mm,0mm) {$\ctbio_0$};
				\draw[{Hooks[right,length=5,width=6]}->] (23mm,20mm)--(35mm,27.5mm); 
				\node at (41mm,10mm) {$\cbio_0$};
				\draw[<<-] (22mm,3mm)--(35mm,10.5mm); 
				\node at (65mm,25mm) {$\bio_0$};
				\draw[{Hooks[right,length=5,width=6]}->] (23mm,0mm)--(35mm,7.5mm); 
				\draw[{Hooks[right,length=5,width=6]}->]  (46mm,11mm)--(60mm,21.5mm);
				\draw[<<-] (44mm,15mm)--(60mm,25.5mm); 
				\draw[->] (17mm,18mm)--(17mm,3mm); 
				\draw[<-] (14mm,18mm)--(14mm,3mm); 
				\node at (11mm,10mm) {$\mathcal{F}_{T}$};
				\node at (19mm,10mm) {$U$};
				\draw[->] (42mm,28mm)--(42mm,13mm); 
				\draw[<-] (39mm,28mm)--(39mm,13mm); 
				\node at (36mm,21mm) {$\mathcal{F}_{C}$};
				\node at (44mm,21mm) {$U$};
				\draw[->] (66mm,42mm)--(66mm,28mm); 
				\draw[<-] (63mm,42mm)--(63mm,28mm); 
				\node at (60mm,34mm) {$\mathcal{F}$};
				\node at (68mm,35mm) {$U$};
\end{tikzpicture}
\end{aligned}
\label{eq:10.20}
\end{equation}
with the frogetfull functor $U$ 
\begin{equation}
				\begin{array}[H]{c}
								\left( U\biop \right)^{-}(n) = \biop_{1,n} \quad , \quad
								(U\biop)^{+}(n)=\biop_{n,1} \\\\
								p\circ (p_{i}):= p\circ {\oplus} p_i, (q_i)\circ q:=
								({\oplus} q_{i})\circ q \quad \text{etc.}
				\end{array}
				\label{eq:10.21}
\end{equation}
The left adjoints $\mathcal{F}\biop$ give the free prop generated by the bio $\biop$. Note
that when $\biop$ is (totally) commutative, the elements of $\left( \mathcal{F}_{C}\biop
\right)_{n,m}$, (resp. $\mathcal{F}_{T}\biop$), can be written, non-uniquely, as 
\begin{equation}
				\begin{array}[H]{c}
				\text{``}\;\; \left( \oplussum\limits_{i=1}^{n} p_i \right)\circ \sigma\circ \left(
				\oplussum\limits_{j=1}^{m}q_{j} \right) \;\; \text{''} \\\\
				p_{i}\in \biop^{-}(k_i), \; q_{j}\in \biop^{+}(\ell_j), \; \sigma\in S_{L}, \;
				L=k_1+\cdots + k_n = \ell_1+\cdots + \ell_m.
				\end{array}
				\label{eq:10.22}
\end{equation}
Moreover, we have
\begin{equation}
				\Big( \overset{n}{\oplus} p_i \Big)\circ \sigma \circ \Big(
				\overset{m}{\oplus}q_{j} \Big) \simeq
        \Big(\overset{n^{\prime}}{\hspace{-1mm}\oplus} p^{\prime}_i\Big)\circ
				\sigma^{\prime}\circ \Big(\overset{m^{\prime}}{\hspace{-1mm}\oplus}
        q_{j}^{\prime}\Big)
				\label{eq:10.23}
\end{equation}
iff we have $\big( \overset{n}{\oplus}\varphi(p_i)\big)\circ \sigma \circ
\big(\overset{m}{\oplus}\varphi(q_j)\big) =  \big(
\overset{n^{\prime}}{\hspace{-1mm}\oplus}\varphi(p^{\prime}_i)\big)\circ \sigma^\prime \circ
\big(\overset{m^{\prime}}{\hspace{-1mm}\oplus}\varphi(q_j^\prime)\big)$
for all $\varphi\in\bio(\biop,U\mathscr{Q})$ and all $\mathscr{Q}\in \cprop$ (resp. $\ctprop$).\\\\
For a rig 
$R$ we have the prop $\biop_{R}$ with $(\biop_{R})_{n,m}$ the $n$ by $m$ matrices with
entries in $R$, composition is matrix multiplication, and $\oplus$ is block-direct sum of
matrices. When $R$ has an involution $\biop_R$ has an involution; for $A=(a_{ij})$ in
$(\biop_{R})_{n,m}$ we have $A^{t}=(b_{ji})$, $b_{ji}=a_{ij}^{t}$. When $R$ is
commutative $\biop_{R}$ is totally-commutative and the diagram 
\begin{equation}
				\begin{aligned}
				\begin{tikzpicture}[baseline=0mm]
				\node at (7mm,18mm) {$\crig$};
				\draw[{Hooks[right,length=5,width=6]}->] ($(13mm,23mm)-(0,3mm)$)--($(27mm,32mm)-(0,3mm)$); 
				\draw[{Hooks[right,length=5,width=6]}->] ($(13mm,20mm)-(0,3mm)$)--($(27mm,12mm)-(0,3mm)$); 
				\node at (35mm,30mm) {$\ctprop$};
				\node at (34mm,8mm) {$\ctbio_0$};
				\draw[<-] (36.5mm,11mm) -- (36.5mm,28mm);
				\draw[->] (33mm,11mm) -- (33mm,28mm);
				\node at (30mm,19mm) {$\mathcal{F}_{T}$};
				\node at (38.5mm,19mm) {$U$};
\end{tikzpicture}
				\end{aligned}
				\label{eq:10.24}
\end{equation}
is commutative. \\
For a $\prop$ $\biop$ we have the groups of invertible elements
\begin{equation}
				\gl_{n}(\biop):= \left\{ a\in \biop_{n,n}, \exists a^{-1}\in\biop_{n,n},
				a^{-1}\circ a= a\circ a^{-1}=1_{n} \right\}
				\label{eq:10.25}
\end{equation}
and homomorphisms
\begin{equation}
				\begin{array}[H]{cl}
								\gl_{n}(\biop)\times \gl_{m}(\biop) &\xrightarrow{\qquad}
								\gl_{n+m}(\biop) \\\\
								a\quad , \quad b  &\xmapsto{\qquad} a\oplus b
				\end{array}
				\label{eq:10.26}
\end{equation}
\begin{equation}
  \label{eq:10.27}
\begin{array}[h]{l}
  \tag*{(10.27)\; E.g. } \stepcounter{equation} \\
  \gl_n(\F)\equiv S_n\subseteq \gl_n(\F[\pm 1])\equiv (\pm 1)^n\rtimes S_n \subseteq \gl_n(\Z_{\R})\equiv O(n)\subseteq \gl(\Z_{\C})\equiv U(n)
\end{array}
\end{equation}
\section{$\prop$-Schemes} \label{sec:11}
For a commutative prop $\biop\in\cprop$ we have the compact sober topological space
$\text{spec}(\biop)\equiv \text{spec}(U\biop)$. Since $\biop_{1,1}$ acts centrally on the sets
$\biop_{n,m}$ there is no problem with localizations with respect to multiplicative sets
$S\subseteq\biop_{1,1}$ and we get cannonical map 
\begin{equation}
				\phi_{S}:\biop\xrightarrow{\qquad} S^{-1}\biop
				\label{eq:11.1}
\end{equation}
In particular we have the localizations
\begin{equation}
				\begin{array}[H]{l}
				\biop_{\pid}:=S_{\pid}^{-1}\biop \quad , \quad S_{\pid}=\biop_{1,1}\setminus \pid
								\quad, \quad \pid\subseteq \biop_{1,1}\;\; \text{prime} \\\\
								\biop_{f}:= S_{f}^{-1}\biop\quad , \quad S_{f}=\left\{ 1,f,f^{2},\cdots
								f^{n}\cdots \right\}\quad , \quad f\in\biop_{1,1}.
				\end{array}
				\label{eq:11.2}
\end{equation}
Moreover, commutativity imply that there is a sheaf of commutative props $\bigo_{\biop}$
over $\text{spec}(\biop)$ with  stalks 
\begin{equation}
				\bigo_{\biop, \pid} =
				\lim\limits_{\mathscr{U}\ni\pid}\bigo_{\biop}(\mathscr{U})\xrightarrow{\sim}
				\biop_{\pid}
				\label{eq:11.3}
\end{equation}
and with global sections
\begin{equation}
				\biop_{f}\xrightarrow{\quad\sim\quad} \bigo_{\biop}\big(D(f)\big).
				\label{eq:11.4}
\end{equation}
We get a functor 
\begin{equation}
				\text{spec}: \big(\cprop)^{\op}\xrightarrow{\qquad}\cprop_{\text{loc}}/\top
				\label{eq:11.5}
\end{equation}
with $\cprop_{\text{loc}}/\top$ the category with objects $(\xspace,\bigo_{\xspace})$,
$\xspace\in\top$, $\bigo_{\xspace}$ a sheaf of commutative props with local stalks, and maps of
prop-spaces inducing local maps on stalks. We get the category of \myemph{Prop-Schemes}
$\psch$: 
\begin{equation}
				(\cprop)^{\op}\subseteq \psch \subseteq \cprop_{\text{loc}}/\top , 
				\label{eq:11.6}
\end{equation}
it is the full subcategory of $\cprop_{\text{loc}}/\top$ consisting of $(\xspace,
\bigo_{\xspace})$ such that for some open cover $\xspace=\bigcup\limits_{i}
\mathscr{U}_{i}$ 
\begin{equation}
				\big( \mathscr{U}_{i}, \bigo_{\xspace}\big|_{\mathscr{U}_{i}}\big) \cong
				\text{spec}\big(\bigo_{\xspace}(\mathscr{U}_{i})\big)
				\label{eq:11.7}
\end{equation}
The adjunction (\ref{eq:10.20}) prolongs to an adjunction, over $\top$, under Sch, 
\begin{equation}
				\begin{aligned}
				\begin{tikzpicture}[baseline=0mm]
				\node at (9mm,18.5mm) {$\sch$};
				\draw[{Hooks[right,length=5,width=6]}->] ($(13mm,23mm)-(0,3mm)$)--($(27mm,32mm)-(0,3mm)$); 
				\draw[{Hooks[right,length=5,width=6]}->] ($(13mm,20mm)-(0,3mm)$)--($(27mm,12mm)-(0,3mm)$); 
				\node at (35mm,30mm) {$\psch$};
				\node at (34mm,8mm) {$\gsch$};
				\draw[<-] (36.5mm,11mm) -- (36.5mm,28mm);
				\draw[->] (33mm,11mm) -- (33mm,28mm);
				\node at (30mm,19mm) {$\mathcal{F}_{C}$};
				\node at (38.5mm,19mm) {$U$};
				\node at (60mm,18.5mm) {$\top$};
				\draw[->] (41mm,9mm) -- (55mm,17.5mm);
				\draw[->] (41mm,28mm) -- (55mm,19.5mm);
\end{tikzpicture}
				\end{aligned}
				\label{eq:11.8}
\end{equation}
The rest of section \S \ref{sec:7} work verbatim for prop-schemes. Similarly, the
discussion of limits in \S \ref{sec:8} works exactly the same with prop-schemes and we
have the category of pro-$\prop$ schemes
\begin{equation}
				\begin{aligned}
\begin{tikzpicture}[baseline=0mm]
				\node at (40mm,0mm) {$\text{pro-}\biop\text{Sch}$};
				\node at (25mm,25mm) {$\biop\text{Sch}$};
				\node at (65mm,25mm) {$\cprop_{\text{loc}}/\top$};
				\draw[{Hooks[right,length=5,width=6]}->] (32mm,25mm)--(52mm,25mm);
				\draw[{Hooks[left,length=5,width=6]}->] (25mm,22mm)--(35mm,3mm);
				\draw[->] (45mm,3mm) -- (55mm,22mm);
				\node at (55mm,10mm) {$\lim\limits_{\longleftarrow}$};
\end{tikzpicture}
				\end{aligned}
				\label{eq:11.9}
\end{equation}
\begin{example}
				Using the notations of (\ref{example:8.8}), we have the ``\myemph{compactified
				$\text{spec}(\Z)$}'' with the same underlying topological spaces as in
				(\ref{eq:8.10}), (\ref{eq:8.11}), but with the sheaf of props $\bigo_{\Z}$ with
				$(\bigo_{\Z})_{n,m}(\mathscr{U})$ the $n$ by $m$ $\Q$-valued matrices \break $a=(a_{ij})$,
				$a_{ij}\in \Q$ such that 
				\begin{equation}
								a\circ \Z_{p}^{m}\subseteq \Z_{p}^{n} \quad , \quad \text{all $p\in \mathscr{U}$},
								\label{eq:11.11}
				\end{equation}
				and moreover, 
				\begin{equation}
								a\circ\Z_{\R}^{m}\subseteq \Z_{\R}^{n} \quad \text{if the real prime
								$\eta\in \mathscr{U}$}
								\label{eq:11.12}
				\end{equation}
				(with $\Z_{\R}^{n}=\left\{ (x_{i})\in\R^{n}, \sum |x_i|^2\le 1 \right\}$ the unit
				$\ell_2$-balls). The global sections
				$\bigo_{\Z}(\overline{\text{spec}\Z})_{n,m}$ are the $n$ by $m$ $\Z$ valued
				matrices satisfying (\ref{eq:11.12}), which are the matrices having in each column
				and in each raw at most one non-zero term with entry $1$ or $-1$:
				\begin{equation}
								\bigo_{\Z}(\overline{\text{spec}\Z})\equiv \F[\pm 1]
								\label{eq:11.13}
				\end{equation}
\end{example}
\section{The Witt ring and Frobenius correspondences}
For a prop $\biop\in \cprop$ we have the group $\gl_{n}(\biop)$ of invertible elements of
the monoid $\biop_{n,n}$ and it acts on $\biop_{n,n}$ by conjugation; we let $[p]$ denote
the conjugacy class of $p\in\biop_{n,n}$ and we let $[\biop_{n,n}]$ denote the collection
of these conjugacy classes. We have an embedding
\begin{equation}
				\left[ \biop_{n,n} \right]\xrightarrow{\qquad} \left[ \biop_{n+1,n+1} \right]\;\;
				, [p]\xmapsto{\qquad}[p\oplus 0], 
				\label{eq:12.1}
\end{equation}
and we take the direct limit:
\begin{equation}
				[\biop]:=\lim\limits_{\overset{\rightarrow}{n}}\left[ \biop_{n,n} \right]
				\label{eq:12.2}
\end{equation}
We have a commutative monoid structure on $[\biop]$ via 
\begin{equation}
				[p_1]+[p_2] := \left[ p_1\oplus p_2 \right]
				\label{eq:12.3}
\end{equation}
and applying $K$ (Grothendieck localization of addition) we get an abelian group, the Witt group:
\begin{equation}
				\mathcal{W}(\biop):=K \left( [\biop] \right). 
				\label{eq:12.4}
\end{equation}
This group has the Frobenius endomorphisms given by 
\begin{equation}
				F_n:\mathcal{W}(\biop)\rightarrow \mathcal{W}(\biop)\;\;, \;\; F_n[p]=[p^n]=
				\left[ \underbrace{p\circ p\circ \cdots\circ p}_{n} \right]
				\label{eq:12.5}
\end{equation}
We thus have a functor 
\begin{equation}
				\mathcal{W}: \prop\xrightarrow{\qquad} (Ab)^{\N}
				\label{eq:12.6}
\end{equation}
When the prop $\biop$ is \myemph{totally} commutative, we have multiplication on $[\biop]$ via, cf.
(\ref{eq:10.18}), 
\begin{equation}
				[p_1]\cdot [p_2]= [p_1\otimes p_2]
				\label{eq:12.7}
\end{equation}
making $[\biop]$ into a commutative rig, and so $\mathcal{W}(\biop)$ is a commutative ring
and the $F_n$'s are ring endomorphisms: 
\begin{equation}
				\mathcal{W}:\ctprop\xrightarrow{\qquad}(\cring)^{\N}
				\label{eq:12.8}
\end{equation}
We can summarize our constructions in the following diagram:
\begin{equation}
  \begin{aligned}
  \begin{tikzpicture}[baseline=0mm]
    \node at (50mm,5mm) {$\cring$};
    \node at (90mm,5mm) {$\left( \cring_{\text{loc}}\Big/\top \right)^{\text{op}}$};
    \draw[{Hooks[right,length=5,width=6]}->] (56mm,3.5mm)--(76mm,3.5mm); 
    \draw[<<-] (56mm,6mm)--(76mm,6mm); 
    \node at (65mm,1mm) {$\text{spec}$};
    \node at (65mm,8mm) {$\Gamma$};
    \draw[{Hooks[left,length=5,width=6]}->] (48mm,8mm)--(44mm,18mm);
    \node at (44mm,20mm) {$\ctbio_0$};
    \node at (37mm,35mm) {$\ctprop_0$};
    \node at (8mm,36mm) {$\left( \cring \right)^{\N^{+}}$};
    \draw[<-] (15mm,35mm)--(28mm,35mm);
    \node at (23mm,37mm) {$\mathscr{W}$};
    \draw[{Hooks[right,length=5,width=6]}->] (40mm,24mm)--(36mm,33mm);
    \draw[<<-] (42mm,24mm)--(38mm,33mm);
    \node at (35mm,28mm) {$\mathcal{F}_{T}$};
    \node at (42mm,30mm) {$U$};
    \draw[{Hooks[right,length=5,width=6]}->] (49mm,24mm)--(56mm,33mm);
    \draw[<<-] (45mm,24mm)--(53mm,33mm);
    \node at (55mm,35mm) {$\cbio_0$};
    \node at (45mm,55mm) {$\cprop_0$};
    \node at (90mm,35mm) {$\left(\cbio_{\text{loc}}/\top\right)^{\text{op}}$};
    \draw[{Hooks[right,length=5,width=6]}->] (88mm,9mm)--(88mm,31mm); 
    \node at (91mm,55mm) {$\left( \cprop_{\text{loc}} /\top\right)^{\text{op}}$};
    \draw[{Hooks[right,length=5,width=6]}->] (53mm,53.5mm)--(75mm,53.5mm); 
    \draw[<<-] (53mm,56mm)--(75mm,56mm); 
    \node at (65mm,51mm) {$\text{spec}$};
    \node at (65mm,58mm) {$\Gamma$};
    \draw[{Hooks[right,length=5,width=6]}->] (61mm,33.5mm)--(76mm,33.5mm); 
    \draw[<<-] (61mm,36mm)--(76mm,36mm); 
    \node at (69mm,31mm) {$\text{spec}$};
    \node at (69mm,38mm) {$\Gamma$};
    \draw[{Hooks[right,length=5,width=6]}->] (86mm,38mm)--(86mm,51mm); 
    \draw[<<-] (88mm,38mm)--(88mm,51mm); 
    \node at (91mm,45mm) {$U$};
    \node at (83mm,45mm) {$\mathcal{F}_{C}$};
    \draw[<<-] (55mm,38mm)--(50mm,52mm); 
    \draw[{Hooks[right,length=5,width=6]}->] (53mm,38mm)--(48mm,52mm); 
    \node at (55mm,47mm) {$U$};
    \node at (48mm,44mm) {$\mathcal{F}_{C}$};
    \draw[<<-] (35mm,38mm)--(40mm,52mm); 
    \draw[{Hooks[right,length=5,width=6]}->] (38mm,38mm)--(43mm,52mm); 
  \end{tikzpicture}
  \end{aligned}
  \label{eq:12.9a}
\end{equation}
Here $\left( \cring \right)^{\N^{+}}$ is the category of commutative rings with an action
of the multiplicative monoid $\N^{+}$ of positive natural numbers (the free commutative
monoid on the set of primes), and $\cring_{\text{loc}}/\top$ is the usual category of
locally ringed spaces. \\
Given a prop-scheme $(\xspace,\bigo_{\xspace})$ we can apply $\mathcal{W}$ to the props
$\bigo_{\xspace}(\mathscr{U})$ to obtain a sheaf of abelian groups
$\mathcal{W}(\bigo_{\xspace})$ over
$\xspace$. When all the $\bigo_{\xspace}(\mathscr{U})$ are totally commutative
$\mathcal{W}(\bigo_{\xspace})$ is a sheaf of commutative rings with Frobenius
endomorphisms. \\\\
Given an ordinary commutative ring $R\in\cring$, we have the
				associated totally-commutative prop $\biop_{R}\in\ctprop$ and
        $\mathcal{W}(\biop_{R})$ is the ring of rational Witt vectors, cf. \cite{A},
\begin{equation}
	\begin{array}[H]{rc}
		\displaystyle \mathcal{W}(\biop_{R}) & \hookedrightarrow{1}1+x\cdot
                R\left[ [x] \right] \\\\
                \displaystyle \left[ p_1 \right]-\left[ p_2 \right]
                &\xmapsto{\qquad} \frac{\det(1-x\cdot p_1)}{\det (1-x\cdot p_2)}.
	\end{array}
	\label{eq:12.10}
\end{equation}
For a domain $R$, with fraction field $K$, let $\overline{K}$ be an algebraic
closure of $K$, \break
$G_{K}=\text{Gal}\left( \overline{K}/K \right)$,
$\overline{R}$ the integral closure of $R$ in $\overline{K}$, than
$\mathcal{W}(\biop_{R})$ is the free abelian group on monic irreducible
polynomials $f(x)\not= x$, via the correspondence 
\begin{equation}
	f(x)=x^{n}+a_1x^{n-1}+\cdots + a_{n}\longleftrightarrow
	\tilde{f}(x)=x^{\deg f}\cdot f\left( \frac{1}{x} \right)=1+a_1x+\cdots
	+a_nx^{n}
	\label{eq:12.11}
\end{equation}
				Alternatively, $\mathcal{W}(\biop_{R})$ is the free abelian group on $G_{K}$
				orbits in $\overline{R}\setminus\left\{ 0 \right\}$: 
				\begin{equation}
								\mathcal{W}(\biop_{R}) = \oplussum\limits_{[\alpha]\in \left(
																\overline{R}\setminus \left\{ 0 \right\}
                            \right) / G_{K} } \Z[\alpha]
								\label{eq:12.12}
				\end{equation}
        with $[\alpha]=\{\alpha=\alpha_{1},\cdots , \alpha_{n}\}$ the $G_{K}$-conjugates of $\alpha$.
				The multiplication in $\mathcal{W}(\biop_{R})$ is given by multiplying orbits:
				\begin{equation}
								[\alpha]\cdot[\beta]=\left\{ \alpha_{1},\cdots , \alpha_{n} \right\}\cdot
								\left\{ \beta_{1},\cdots , \beta_{m} \right\}=\left\{ \alpha_{i}\cdot
								\beta_{j} \right\}
								\label{eq:12.13}
				\end{equation}
\section{The ring $\mathcal{W}=\mathcal{W}(\overline{\text{spec} \Z})$}
For the compactified $\overline{\text{spec}(\Z)}=\text{spec}(\Z)\cup \left\{ \eta
				\right\}$, the condition (\ref{eq:11.12}) at the real prime implies that all the
				eigenvalues of $a$ are in the unit disc, hence the global sections
				$\mathcal{W}(\overline{\text{spec}(\Z)})$ is the free abelian group on
				$G_{\Q}$-orbits of algebraic integers $[\alpha]=\left\{ \alpha_1,\cdots ,
				\alpha_n \right\}$ with $|\alpha_{i}|\le 1$, hence they are roots of unity, and so
	\begin{equation}
		\mathcal{W}(\overline{\text{spec}\Z})\equiv \oplussum\limits_{n\ge 1}\Z\cdot
		\phi_{n} = \oplussum\limits_{n\ge 1}\Z[\bbmu_{n}^{\ast}]
		\label{eq:12.14}
	\end{equation}
				with $\phi_{n}$ the cyclotomic polynomial with roots the primitive $n$-th roots of unity
        $\bbmu_{n}^{*}\cdot$. The multiplication is given by
        \begin{equation}
          \label{eq:12.15} 
                    \phi_n\cdot \phi_m = \phi_{n\cdot m} \quad \text{for}\quad (n,m)=1, 
        \end{equation}
        \begin{equation}
          \begin{array}[H]{ll}
            \phi_{p^{n}}\cdot \phi_{p^{m}} = 
            \left\{\begin{array}[H]{lc}
                (1-p^{-1})p^{m}\cdot \phi_{p^n} & n>m\ge 1 \\\\
                (1-p^{-1})p^{n}\cdot \left[ \phi_{p^{n}}+\phi_{p^{n-1}}+\cdots
                +\phi_{p}+\phi_{1} \right] - p^{n-1}\cdot \phi_{p^{n}} & n=m\ge 1
          \end{array}\right.
          \end{array} 
          \label{eq:12.16}
        \end{equation}
        The algebra $\mathcal{W}=\mathcal{W}(\overline{spec \Z})$ is the union of its finite subalgebras
        \begin{equation}
          \label{eq:12.17}
          \mathcal{W}=\bigcup_{N\geq 1}\mathcal{W}_N \quad , \quad \mathcal{W}_N=\oplussum_{d\mid N}\Z \phi_d
        \end{equation}
        and is the tensor product over all primes (with respect to $\phi_1=1$)
        \begin{equation}
          \label{eq:12.18}
          \mathcal{W} = \otimes_p \mathcal{W}_{p^\infty} \quad , \quad \mathcal{W}_{p^\infty}=\oplus_{\ell\geq 0}\Z \phi_{p^\ell},
        \end{equation}
        with $\mathcal{W}_{p^n}/\mathcal{W}_{p^{n-1}}$ being an integral quadratic extension  (\ref{eq:12.16}). \\
        The Frobenius ring endomorphisms are  completely multiplicative
        \begin{equation}
          \label{eq:2.19}
          F_{m_1}\circ F_{m_2}= F_{m_1\cdot m_2}
        \end{equation}
        and are given for $p$ prime by
        \begin{equation}
          \label{eq:12.20}
          F_p\phi_{n}= \left\{
          \begin{array}[h]{ll}
            \phi_n &  p \not\mid n \\\\
            p\cdot \phi_{n/p} & p^2\mid n \\\\
            (p-1)\cdot \phi _{n/p} & p\mid n \; , \; p^2\not\mid n
          \end{array}\right.
      \end{equation}
      or generally by
      \begin{equation}
        \label{eq:12.21}
        F_m\phi_n= (m,n)\cdot \Big( \prod\limits_{\begin{array}[h]{c}p\mid (m,n) \\ p \not\mid n/(m,n)\end{array}}(1-p^{-1})\Big) \cdot \phi_{n/(m,n)}\;\; , \;\; (m,n)=\gcd(m,n)
      \end{equation}
      We have a cannonical ring homomorphism
      \begin{equation}
        \label{eq:12.22}
          \begin{array}[h]{r}
            t_1=\text{tr}: \mathcal{W}\xrightarrow{\quad}{} \Z \\
           \text{$[a]$} \xmapsto{\quad}{} \text{tr}\; a
          \end{array}
        \end{equation}
        and
        \begin{equation}
          \label{eq:12.23}
          \text{tr}(\phi_n) = \sum_{\xi\in\bbmu_n^{\ast}}\xi=\mu(n)
        \end{equation}
is the M\"obius function. We obtain the homomorphisms
\begin{equation}
  \label{eq:12.24}
  \begin{array}[h]{ll}
    t_m=\text{tr}\circ F_m : & \mathcal{W}\xrightarrow{\quad} \Z \\
    \text{$[a]$}\xmapsto{\quad} \text{tr}(a^m)
  \end{array}
\end{equation}
and
\begin{equation}
  \label{eq:12.25}
  t_m(\phi_n) = \text{tr}(F_m\phi_n)=\sum_{\xi\in\bbmu_n^{\ast}}\xi^m=C_n^m
\end{equation}
with the Ramanujan sums
\begin{equation}
  \label{eq:12.26}
  C_n^m= \mu \left(\frac{n}{(n,m)}\right)\cdot \frac{\varphi(n)}{\varphi(\frac{n}{(n,m)})} \;\; , \; \; n\geq 1 \;\; , \;\; m\in
  \left(\Z/n\Z\right)/\left(\Z/n\Z\right)^{\ast}
\end{equation}
Note that 
\begin{equation}
  \label{eq:12.27}
  m \equiv 0 \pmod {n} \Longrightarrow F_m\phi_n= \varphi(n)\cdot \phi_1
\end{equation}
and we get by continuity an action of the multiplicative monoid of ``super-natural-number''
\begin{equation}
  \label{eq:12.28}
  \hat{\Z}/\hat{\Z}^{\ast}=\prod_{p \text{ prime}}p^{\N\cup \{\infty\} }
\end{equation}
by ring endomorphisms of $\mathcal{W}$. In particular, we get the homomorphism
\begin{equation}
  \label{eq:12.29}
  F_0=\lim_{m\to 0\in\hat{\Z}}F_m\; : \; \mathcal{W}\longrightarrow \Z. 
\end{equation}
We have $F_0\circ F_m=F_0$ for all $m$, and
\begin{equation}
  \label{eq:12.30}
  F_0\phi_n = \# \bbmu_n^{\ast}=\varphi(n)
\end{equation}
is Euler's function. \vspace{.1cm}\\
The group of roots of unity $\bbmu_\infty$ give rise to the group-ring $\Z\cdot\bbmu_\infty$, which maps homomorphically onto the ring of cyclotomic integers $\Z[\bbmu_\infty]$, and also maps onto $\Z$ via the augmutation. Taking $\hat{\Z}^*=\text{Aut}(\bbmu_\infty)$ invariants we get
\begin{equation}
  \label{eq:13.18}
  \begin{aligned}
  \begin{tikzpicture}[baseline=0mm]
    \node at (12mm,15mm) {$\mathcal{W}=(\Z\cdot \bbmu_\infty)^{\hat{\Z}^*}$};
    \draw[->>] (25mm,16mm)--(40mm,23mm);
    \draw[->>] (25mm,14mm)--(40mm,7mm);
    \node at (32mm,22mm) {$t_1$};
    \node at (35mm,12mm) {$F_0$};
    \node at (43mm,6mm) {$\Z$};
    \node at (53mm,25mm) {$\left(\Z[\bbmu_\infty]\right)^{\hat{\Z}^*}=\Z$};
  \end{tikzpicture}
    \end{aligned}
\end{equation}
\begin{remark}
  We have
  \begin{equation*}
    \cring(\mathcal{W},\C) \equiv \cring(\mathcal{W},\Z)\equiv \hat{\Z}/\hat{Z}^*
  \end{equation*}
  Indeed, from (\Ref{eq:12.18}) we obain
  \begin{equation*}
    \cring(\mathcal{W},\C)\equiv \prod_p\cring(\mathcal{W}_{p^\infty},\C). 
  \end{equation*}
  From (\Ref{eq:12.16}), we obtain for $\psi\in\cring(\mathcal{W}_{p^\infty},\C)$ the implications for $n\ge 1$,
  \begin{equation*}
    \begin{array}[h]{lll}
      \ & \ & \displaystyle \psi(\phi_{p^n})\not= 0 \\\\
      \displaystyle \Longrightarrow & \ & \psi(\phi_{p^m}) = p^m\left(1-p^{-1}\right) \text{ for $m=1,2,\ldots , n-1$} \\\\
      \displaystyle \Longrightarrow & \ & \psi(\phi_{p^n}) = p^n\left(1-p^{-1}\right) \text{ or $\psi(\phi_{p^n})=-p^{n-1}$} 
    \end{array}
  \end{equation*}
  From this it follows that $\psi=\text{tr}\circ F_{p^n}$,  $n\ge 0$,
  \begin{equation*}
    \text{tr}\circ F_{p^n}(\phi_{p^m})=
\left\{
    \begin{array}[h]{ll}
      1 & m=0 \\
      p^m(1-p^{-1}) & 1\le m \le n \\
      -p^n & m=n+1 \\
      0 & m>n+1
    \end{array}
    \right. 
  \end{equation*}
or that $\psi=\lim\limits_{n\to\infty} \text{tr}\circ F_{p^n} = F_{p^\infty}=F_0$. Thus
    \begin{equation*}
      \cring(\mathcal{W}_{p^\infty},\C) \equiv p^{\N\cup\{\infty\}}
\end{equation*}
\qed
\end{remark}
We have as well the additive projection
\begin{equation}
  \label{eq:12.31}
  \int \; : \; \mathcal{W}\rightarrow \Z \quad , \quad \int \phi_n=
\left\{  \begin{array}[h]{ll}
           1 & n=1 \\
           0 & n>1
\end{array}\right.
\end{equation}
and we have
\begin{equation}
  \label{eq:12.32}
  \int (\phi_{n_1}\cdot \phi_{n_2}) =
\left\{  \begin{array}[h]{ll}
           \varphi(n) & n_1=n_2=n \\\\
           0 & n_1\not= n_2
  \end{array}\right. 
\end{equation}
We can extend scalar to $\C$, and obtain the ring homomorphisms
\begin{equation}
  \label{eq:12.33}
  t_m=\text{tr}\circ F_m: \mathcal{W}_{\C}=\C\otimes \mathcal{W}\longrightarrow \C. 
\end{equation}
The projection $\int: \mathcal{W}_{\C}\rightarrow \C$ can be expanded using the $t_m$'s as
\begin{equation}
  \label{eq:12.34}
  \int f \equiv \lim_{M\to 0\in \hat{\Z}}\frac{1}{M}\sum_{m=1}^M t_m(f).
\end{equation}
We can complete $\mathcal{W}_{\C}$ with respect to the state $\int$ and obtain the Hilbert space
\begin{equation}
  \label{eq:12.35}
  \mathcal{H} = \hat{\mathcal{W}}_{\C}\quad, \quad \langle f,g\rangle = \int (f\cdot \overline{g})
\end{equation}
with orthogonal basis $\{\phi_n\}_{n\ge 1}\;\; , \;\; \|\phi_n\|^2=\varphi(n)$. \\
The complete-multiplicativity of the $F_m$, (\ref{eq:2.19}), suggests considering
  \begin{equation}
    \label{eq:12.36}
    \zeta (F,s):= \sum_{m\ge 1}\frac{1}{m^s} F_m\equiv \prod_p\left(1+\frac{1}{p^s}F_p+\frac{1}{p^{2s}}F_{p^2}+\cdots\right)\equiv
      \prod_p \frac{1}{(1-p^{-s}F_p)}
    \end{equation}
    The adoint of $F_m$, the ``verschiebung'' $F_m^\ast$ is the
    additive map given by \break  $F_m^\ast\phi_n=\phi_{m\cdot n}$. The
    multiplicativity $F^\ast_{m_1}\circ F_{m_2}^\ast=F_{m_1\cdot m_2}^{\ast}$ suggest considering similarly
    \begin{equation}
      \label{eq:12.37}
      \zeta(F^{\ast},s)=\sum\limits_{m\ge 1}\frac{1}{m^s}F_m^{\ast} = \prod_p \frac{1}{(1-p^{-s}F_p^{\ast})}=\zeta(F,\overline{s})^{\ast}
    \end{equation} 
    In terms of these zeta elements we can interpret
    Ramanujan's sums
\begin{equation}
  \label{eq:12.38}
  t_m\big(\zeta(F^{\ast},t)\phi_1\big) = \text{tr}\big(F_m\zeta(F^{\ast},t)\phi_1\big) = \sum_{n\ge 1}\frac{C_n^m}{n^t} = \frac{1}{\zeta(t)}\cdot\left(
    \sum_{d\mid m}d^{1-t}\right) 
\end{equation}

\begin{equation}
  \label{eq:12.39}
\text{tr}\left(\zeta (F,s)\phi_n\right) = \sum_{m\ge 1}\frac{C_n^m}{m^s}=\zeta(s)\cdot\left(\sum_{d\mid n}\mu\left(\frac{n}{d}\right) d^{1-s}\right)
\end{equation}

\begin{equation}
  \label{eq:12.40}
\text{tr}\left(\zeta(F,s)\zeta(F^{\ast},t)\phi_1\right) = \sum_{n,m\ge 1}\frac{C_n^m}{n^tm^s}= \frac{\zeta(s)}{\zeta(t)}\cdot \zeta (s+t-1)
\end{equation}
with $\zeta(s)$ the Riemann zeta function. 

  The probability measure $\frac{1}{M}\sum\limits_{m=1}^M\delta_m$ on $\hat{\Z}$, converges as $M\to 0 \in \hat{\Z}$, to the Haar measure
  $dz$, and we get an isomorphism, the Fourier transform
  \begin{equation}
    \label{eq:12.41}
    \begin{array}[h]{rcl}
      \mathcal{H}=\hat{\mathcal{W}}_{\C} &\xrightarrow{\qquad \sim  \qquad} & L_2(\hat{\Z},dz)^{\hat{\Z}^{\ast}} \\\\
      f &\xmapsto{\qquad\quad\qquad} &  \hat{f}(z):= \text{tr}(F_zf) \\\\
      F_{m } &\xmapsto{\qquad\quad\qquad} & \hat{F}_{m}\hat{f}(z):= \hat{f}(mz) \\\\
      F_m^{\ast} &\xmapsto{\qquad\quad\qquad} & \hat{F}_m^{\ast}\hat{f}(z) := m\cdot \hat{f}(z/m) \\\\
     \langle f,g \rangle \hspace{-9mm} &=& \hspace{-9mm} \displaystyle\int\limits_{\hat{\Z}}\hat{f}(z)\cdot \overline{\hat{g}(z)}\, dz

    \end{array}
  \end{equation}


  The ring $\mathcal{W}$ has a structure of a special-$\lambda$-ring, with $\lambda$-operations.
  \begin{equation}
    \label{eq:13.32}
    \lambda^n:\mathcal{W}\to\mathcal{W}\quad, \quad \lambda^0(x)=1\quad ,\quad
    \lambda^1(x)=x\quad, \quad \lambda^n(x_1+x_2)=\sum\limits_{n_1+n_2=n}\lambda^{n_1}(x_1)\cdot\lambda^{n_2}(x_2)
  \end{equation}
  defined by
  \begin{equation}
    \label{eq:13.33}
    \begin{array}[h]{rll}
      \lambda_t(x)=\sum_{n\ge 0} (-1)^n\lambda^n(x)\cdot t^n: &\mathcal{W}\xrightarrow{\qquad} 1+ &\hspace{-3mm} t\cdot\mathcal{W}[|t|]_{\text{rat}} \vspace{-.3cm}\\
      \ & \rotatebox{270}{\huge $\supseteq$} & \rotatebox{270}{\huge $\supseteq$} \vspace{.1cm}\\
      \oplussum\limits_{n\ge 0}\N\cdot \phi_n \equiv &\mathcal{W}^+\xrightarrow{\qquad} 1+\hspace{-.5cm} &t\cdot\mathcal{W}[t]
    \end{array}
\end{equation}
\begin{equation}
  \label{eq:13.34}
  \lambda_t(\phi_n)=\prod\limits_{\xi\in\bbmu_n^*} (1-[\xi]t)=\sum\limits_{m=0}^{\varphi(n)}\lambda^m(\phi_n)\cdot (-t)^m
\end{equation}

\begin{equation}
  \label{eq:13.35}
\tag*{(13.34) E.g.} \stepcounter{equation}
  \begin{array}[h]{l}
    \lambda_t(\phi_1)=1-t \\\\
    \lambda_t(\phi_2) = 1-t\cdot \phi_2 \\\\
    \lambda_t(\phi_3) = 1 - t\phi_3+t^2 \\\\
    \lambda_t(\phi_4) = 1-t\phi_4+t^2 \\\\
    \lambda_t(\phi_5)=1-t\phi_5+t^2(\phi_5+2\phi_1)-t^3\phi_5+t^4 \\\\
    \lambda_t(\phi_6)=1-t\phi_6+t^2 \\\\
    \lambda_t(\phi_7)=1-t\phi_7+t^2(2\cdot\phi_7+3\phi_1)-t^3(3\phi_7+2\phi_1)+t^4(2\phi_7+3\phi_1)-t^5\phi_7+t^6 \\\\
    \lambda_t(\phi_8)=1-t\phi_8+t^2(\phi_4+2\phi_2+2\phi_1)-t^3\phi_8+t^4 \\\\
    \lambda_t(\phi_9)=1-t\phi_9+t^2(\phi_9+3\phi_3+3\phi_1)-t^3(3\phi_9+\phi_3)+t^4(\phi_9+3\phi_3+3\phi_1)-t^5\phi_9+t^6 \\\\
    \lambda_t(\phi_{10})=1-t\phi_{10}+t^2(\phi_5+2\phi_1)-t^3\phi_{10}+t^4
  \end{array}
\end{equation}
We have,
\begin{equation}
  \label{eq:13.36}
  \lambda_t(\phi_n)=t^{\varphi(n)}\cdot \lambda_{t^{-1}}(\phi_n)\qquad \text{for $n>2$}
\end{equation}
\begin{equation}
  \label{eq:13.37}
  \text{tr}\big(\lambda_t(\phi_n)\big) = \prod\limits_{\xi\in\bbmu_n^*} (1-\xi\cdot t) = \tilde{\phi}_n(t)=\phi_n(t)
\end{equation}
\begin{equation}
  \label{eq:13.38}
  F_0\big(\lambda_t(\phi_n)\big)=(1-t)^{\varphi(n)}
\end{equation}
\begin{equation}
  \label{eq:13.39}
  \lambda^n(k\cdot\phi_1) = \begin{pmatrix} k \\ n  \end{pmatrix} \quad \text{satisfies}\quad (-1)^n\lambda^n(-k\cdot
  \phi_1) =  \begin{pmatrix} k+n-1 \\ n  \end{pmatrix} = \lambda^n(k+n-1)
\end{equation}
We denote the Grothendieck involution by
\begin{equation}
  \label{eq:13.42}
  D(t)=\begin{bmatrix} 1 & 0 \\ 1 & -1 \end{bmatrix}(t) = \frac{t}{t-1}= \frac{1}{1-t^{-1}}
\end{equation}
so that $DD(t)=t$, $D(0)=0$, $D(1)=\infty$, $D(2)=2$, and in the notations of \S \Ref{sec:3}, $1/p+1/q=1 \Leftrightarrow
q=D(p)$. We obtain Grothendieck's gamma-operations
\begin{equation}
  \label{eq:13.43}
  \begin{array}[h]{l}
    \gamma_t(x)=\sum\limits_{n\ge 0} (-1)^n\gamma^n(x)t^n:\; \mathcal{W}\xrightarrow{\quad} 1+t\mathcal{W}[|t|]_{\text{rat}} \\\\
    \gamma_t^0(x)=1,\quad -\gamma^1(x)=x,\quad \gamma^n(x_1+x_2)=\sum\limits_{n_1+n_2=n}\gamma^{n_1}(x_1)\cdot\gamma^{n_2}(x_2)
  \end{array}
\end{equation}
by setting
\begin{equation}
  \label{eq:13.44}
  \gamma_t(x)=\lambda_{D(t)}(x) \Leftrightarrow (-1)^n\gamma^n(x)=\lambda^n(x+n-1)\quad, \text{cf. (~\Ref{eq:13.39})}
  \end{equation}
  We have
  \begin{equation}
    \label{eq:13.45}
    \mathcal{W}=\Z\cdot\phi_1\oplus I \quad ,\quad I=\ker F_0=\oplussum\limits_{n>1}\Z(\phi_n-\varphi(n)\cdot \phi_1)
  \end{equation}
  The gamma-filteration is given by $I_0=\mathcal{W}$, $I_1=I$, and
  \begin{equation}
    \label{eq:13.46}
    I_n\equiv \text{additive subgroup of $\mathcal{W}$ generated by $\gamma^{n_1}(a_1)\cdots \gamma^{n_\ell}(a_\ell), 
      a_i\in I , \quad \sum n_i\ge n$}
  \end{equation}
  It is a $\lambda$-filtration, and on the associated graded ring
  \begin{equation}
    \label{eq:13.47}
    \text{gr}_\gamma\mathcal{W} \equiv \oplussum_{n\ge 0} I_n/I_{n+1}
  \end{equation}
  we have
  \begin{equation}
    \label{eq:13.48}
   \ \hspace{24mm} F_m \equiv \oplussum\limits_{n\ge 0} m^n \qquad \text{c.f. \cite{AT68} prop. 5.3.} 
  \end{equation}
  and also
  \begin{equation}
    \label{eq:13.49}
   \ \hspace{14mm}    (-1)^{m+1}\lambda^m \equiv \oplussum\limits_{n\ge 1} m^{n-1} \qquad \text{c.f. \cite{AT68} prop. 5.5.} 
  \end{equation}
Note that
  \begin{equation}
    \label{eq:13.50}
    \   \ \hspace{22mm} \gamma_t:I^+\equiv \oplussum\limits_{n>1}\N\cdot (\phi_n-\varphi(n)\phi_n)\xrightarrow{\quad} 1+t\mathcal{W}[t]
  \end{equation}
  and
  \begin{equation}
    \label{eq:13.51}
    \gamma_t(\phi_n-\varphi(n)\phi_1)=(1-t)^{\varphi(n)}\cdot \prod\limits_{\xi\in\bbmu_n^*} \big( 1-[\xi]\cdot\frac{t}{t-1}\big) = \prod\limits_{\xi\in\bbmu_n^*}\big( 1- (1-[\xi])t\big)
  \end{equation}
  i.e. passing from $\lambda^m(\phi_n)$ to $\gamma^m(\phi_n-\varphi(1)\phi_1)$ amounts to passing from the $m$-th elementary symmetric function of the roots of unity $\bbmu_n^*$, to that of the ``cyclomic units'' $1-[\xi]$, $\xi\in\bbmu_n^*$.
\begin{remark}
One can proceed as in \cite{D}, inverting the $F_{n}$'s to get an action of the
multiplciative group of positive rational numbers $\Q^{+}=\oplus p^{\Z}$, the
free abelian group on the primes, and extend scalars along $\Q^{+}\subseteq
\R^{+}$; the infinitesimal generator of the $\R^{+}$ action should give an
interesting operation when acting on appropriate cohomology groups.\\
Viewed from the perspective of the Fourier transform (\Ref{eq:12.41}),
``inverting the $F_n$'s'' amount to passing from $\hat{\Z}/\hat{\Z}^*$
to
$\mathbb{Q}\underset{\Z}{\otimes}
\hat{\Z}/\hat{Z}^*=A_{\text{fin}}/\hat{\Z}^*$; ``extending scalars
along $\mathbb{Q}^+\subseteq\R^{+}$'' amount essentially to the passage to
$\mathbb{A}_\text{fin}\oplus\R/_{\hat{\Z}^*\mathbb{Q}^*}=\mathbb{A}/_{\mathbb{Q}^*\hat{\Z}^*}$.
This space and the action of
$\mathbb{A}^*/_{\mathbb{Q}^*\hat{\Z}^*}\equiv\R^+$ on it, was first
suggested by Weil \cite{weil1966fonction}, and was farther studied in \cite{Har90}, \cite{Har01}, and by
Connes \cite{connes1999trace}. But note that for a curve $X$, over a
finite field $\mathbb{F}_q$, we do \underline{not} have a proof of the
Riemann-hypothesis by simply using the action of $A^*_{K}/_{K^*}$ on $A_{K}/_{K^*}$, $K=\mathbb{F}_g(X)$ the function field of $X$; we realy
need the intersection theory on the surface $X\times X$. 
\label{remark:12.42}
\end{remark}

\nocite{*}

\bibliography{Non-Additive-Geometry-and-Frobenius-Correspondences}{}
\bibliographystyle{alpha}
\end{document}